\clearpage\pagebreak[4]\global\pdfpageattr\expandafter{\the\pdfpageattr/Rotate 90}}%
\clearpage\pagebreak[4]\global\pdfpageattr\expandafter{\the\pdfpageattr/Rotate 0}}%
\newcommand{\footremember}[2]{%
    \footnote{#2}
    \newcounter{#1}
    \setcounter{#1}{\value{footnote}}%
}
\def\R{{\mathbb R}}
\def\N{{\mathbb N}}
\def\S{{\mathbb S}}
\DeclareMathOperator{\conv}{conv}
\newcommand{\argmin}{\arg\min}
\newtheorem{theorem}{\bf Theorem}[section]
\newtheorem{lemma}[theorem]{\bf Lemma}
\newtheorem{remark}[theorem]{\bf Remark}
\newtheorem{assumption}[theorem]{\bf Assumption}
\providecommand{\keywords}[1]
{
  \small	
  \textbf{\textbf{Keywords:}} #1
}
\pgfplotsset{compat=1.13}
\begin{document}

\definecolor{qqzzff}{rgb}{0,0.6,1}
\definecolor{ududff}{rgb}{0.30196078431372547,0.30196078431372547,1}
\definecolor{xdxdff}{rgb}{0.49019607843137253,0.49019607843137253,1}
\definecolor{ffzzqq}{rgb}{1,0.6,0}
\definecolor{qqzzqq}{rgb}{0,0.6,0}
\definecolor{ffqqqq}{rgb}{1,0,0}
\definecolor{uuuuuu}{rgb}{0.26666666666666666,0.26666666666666666,0.26666666666666666}
\newcommand{\vi}[1]{\textcolor{blue}{#1}}
\newif\ifcomment
\commentfalse
\commenttrue
\newcommand{\comment}[3]{%
\ifcomment%
	{\color{#1}\bfseries\sffamily#3%
	}%
	\marginpar{\textcolor{#1}{\hspace{3em}\bfseries\sffamily #2}}%
	\else%
	\fi%
}
\newcommand{\victor}[1]{
	\comment{blue}{V}{#1}
}
\definecolor{oucrimsonred}{rgb}{0.6, 0.0, 0.0}
\newcommand{\jean}[1]{
	\comment{oucrimsonred}{J}{#1}
}
\definecolor{cadmiumgreen}{rgb}{0.0, 0.42, 0.24}
\newcommand{\hoang}[1]{
	\comment{cadmiumgreen}{H}{#1}
}

\title{Comparing different subgradient methods
 for solving convex optimization problems with functional constraints}
\author{%
Thi Lan Dinh\footremember{1}{Universit\"at G\"ottingen,\ Lotzestra{\ss}e 16-18, 37083 G\"ottingen, Germany.} \and
Ngoc Hoang Anh Mai\footremember{2}{University of Konstanz, Universit\"atsstra{\ss}e 10, D-78464 Konstanz, Germany.} 
  }
\maketitle
\begin{abstract}
\small
We consider the problem of minimizing a convex, nonsmooth function subject to a closed convex constraint domain. 
The methods that we propose are reforms of subgradient methods based on Metel--Takeda's paper [Optimization Letters 15.4 (2021): 1491-1504] and Boyd's works [Lecture  notes  of  EE364b, Stanford University, Spring 2013-14, pp. 1-39]. 
While the former has complexity $\mathcal{O}(\varepsilon^{-2r})$ for all $r> 1$,  the complexity of the latter is $\mathcal{O}(\varepsilon^{-2})$. 
We perform some comparisons between these two methods using several test examples.

\end{abstract}
\keywords{convex optimization; nonsmooth optimization; subgradient method; machine learning; support vector machine}
\tableofcontents
\section{Introduction}

Given a convex function $f_0:\R^n\to\R$ and a closed convex domain $C\subset \R^n$, consider the convex optimization problem (COP):
\begin{equation}\label{eq:nonsmooth.problem}
    p^\star=\inf_{x\in C}\ f_0(x)\,,
\end{equation}
where $f_0$ might be non-differentiable.
It is well known that the
optimal value and an optimal solution for problem \eqref{eq:nonsmooth.problem} can be approximated as closely as desired by using subgradient methods. 
Although developed very early by Shor in the Soviet Union (see \cite{shor2012minimization}), subgradient methods are still highly competitive with state-of-the-art such as interior-point and Newton methods.
They can be applied to problem \eqref{eq:nonsmooth.problem} with a very large number of variables $n$ because of their little memory requirement.

We might classify problems of the form \eqref{eq:nonsmooth.problem} into three groups as follows:
\begin{enumerate}[(a)]
    \item Unconstrained case: $C=\R^n$.  
    It can be efficiently solved by various methods, in particular with
    Nesterov's method of weighted dual averages \cite{nesterov2009primal}. 
    His method has an optimal convergence rate $\mathcal{O}(\varepsilon^{-2})$.
    
    \item Inexpensive projection on $C$.
    In this case, the projected subgradient method introduced by Alber, Iusem, and Solodov \cite{alber1998projected} can be used for large-scale problems for which interior-point or Newton methods cannot be used.
    \item COP with functional constraints (also known as the standard form in the literature): 
    \begin{equation}
        C=\{x\in\R^n:f_i(x)\le 0\,,\,i=1,\dots,m\,,\,Ax=b\}\,,
    \end{equation}
    where $f_i$ is  a convex function on $\R^n$, for $i=1,\dots,m$, $A$ is a real matrix, and $b$ is a real vector.
    To solve this problem, Metel and Takeda suggest the subgradient method of weighted dual averages \cite{metel2021primal} based on Nesterov's idea.
    They also obtain the optimal convergence rate $\mathcal{O}(\varepsilon^{-2})$ for their method.
    Earlier, Boyd proposes a primal-dual subgradient method in his lecture notes \cite{boyd2014subgradient} without complexity analysis.
    We show that the complexity of Boyd's method is suboptimal.
\end{enumerate}
We are interested in the following case of large-scale COP \eqref{eq:nonsmooth.problem} in group (c):
\begin{assumption}
\label{ass:hard.cond}
    The objective and constraint functions F are non-differentiable at the optimal solution(s) of \eqref{eq:nonsmooth.problem}, non-strongly convex, and have expensive evaluations of their values and subgradients.
\end{assumption}
An example of this type is the nonsmooth form of a semidefinite program (see \eqref{eq:SDP.NS}).
It is hard to use the following methods to solve COP \eqref{eq:nonsmooth.problem} efficiently under Assumption \ref{ass:hard.cond}:
\begin{itemize}
    \item The methods only applied to problems with differentiable functions (e.g., sequential quadratic programming \cite{nocedal2006numerical}) frequently fail for COP \eqref{eq:nonsmooth.problem} because $p^\star$ is  attained at some $x^\star$ where $f_i$ is non-differentiable.
    \item Next, the expensive evaluation of  functions $f_i$ leads to bundle methods (e.g., \cite{monjezi2020filter}) might be less effective than subgradient methods. It is due to their sample size requires to be sufficiently large at each iteration.
    In contract, subgradient methods require an evaluation and a subgradient of $f_i$ at each iteration.
    \item Smoothing techniques can be applied for COP \eqref{eq:nonsmooth.problem}. However, it is also expensive for the evaluations to ensure smoothness (e.g., a smooth approximation for the maximal eigenvalue \cite{chen2004smooth}).
    
    \item Efficient methods exploiting the strong convexity (e.g., \cite{beck20141}) cannot be used because the objective and constraint functions of COP \eqref{eq:nonsmooth.problem} might not be strongly convex.
    \end{itemize}
Overcoming these issues, subgradient methods are suitable solvers for COP \eqref{eq:nonsmooth.problem} with Assumption \ref{ass:hard.cond}.

\paragraph{Contribution.}  Our contribution involving the previous group (c) is twofold:
\begin{enumerate}
    \item In Section \ref{sec:dsg}, we provide an alternative dual subgradient method with several dual variables based on Metel--Takeda's method   \cite{metel2021primal}. 
    Our dual subgradient method has the same complexity $\mathcal{O}(\varepsilon^{-2})$ a Metel--Takeda's.
    Moreover, it is much more efficient than Metel--Takeda's in practice, as shown in Section \ref{sec:experiments}.
    \item We provide in Section \ref{sec:pds} a primal-dual subgradient method based on the nonsmooth penalty method and Boyd's method in \cite[Section 8]{boyd2014subgradient}. 
    Our primal-dual subgradient method converges with complexity $\mathcal{O}(\varepsilon^{-2r})$, for all $r>1$.
\end{enumerate}
To compare, we recall Nesterov's subgradient method \cite[Section 3.2.4]{nesterov2018lectures} in Section \ref{sec:sg}. 
Our experiments in Section 6 show that Nesterov's  and Metel--Takeda's subgradient methods are less efficient (although they have optimal convergence rate $\mathcal{O}(\varepsilon^{-2})$).
In contrast, our dual subgradient method and primal-dual subgradient method are efficient in practice for COP with a large number of variables ($n\ge 1000$).

\section{Preliminaries}
Let $f:\R^n\to \R$ be a real-valued function on the Euclidean space $\R^n$.
We say that $f$ is convex if
\begin{equation}
    f(tx+(1-t)y)\le tf(x)+(1-t)f(y)\,,\quad\forall\  (x,y)\in \R^n\times\R^n\,,\,\forall\  t\in [0,1]\,.
\end{equation}
 If $f:\R^n\to \R$ is a convex function, the subdifferential at a point $\overline{x}$ in $\R^n$, denoted by $\partial f(\overline{x})$, is defined by the set
 \begin{equation}
     \partial f(\overline{x}):=\{g\in\R^n\,:\,f(x)-f(\overline{x})\ge g^\top (x-\overline{x})\}\,.
 \end{equation}
A vector $g$ in $\partial f(\overline{x})$ is called a subgradient at $\overline{x}$. 
The subdifferential is always a nonempty convex compact set.

From now on, we focus on solving 
constrained convex optimization problem with functional constraints:
\begin{equation}\label{eq:CCOP}
    p^\star=\inf_{x\in\R^n}\{f_0(x)\ :\ f_i(x)\le 0\,,\, i=1,\dots,m\,,\quad Ax= b\}\,,
\end{equation}
where $f_i:\R^n\to \R$ is a convex  function, for $i=0,1,\dots,m$, $ A\in \R^{l\times n}$ and $ b\in \R^{l}$.
Let $x^\star$ be an optimal solution for problem \eqref{eq:CCOP}, i.e., $f_i(x^\star)\le 0$, for $i=1,\dots,m$, $Ax^\star=b$ and $f_0(x^\star)=p^\star$.

\section{Subgradient method (SG)}
\label{sec:sg}
This section is devoted to recalling the simple subgradient method for \eqref{eq:CCOP}.
This method is introduced in \cite[Section 3.2.4]{nesterov2018lectures} due to Nesterov for convex problems with only inequality constraints.
We extend Nesterov's method to convex problems with both inequality and equality constraints in a trivial way.

It is easy to see that problem \eqref{eq:CCOP} is equivalent to problem:
\begin{equation}\label{eq:max.con.problem}
    p^\star=\inf_{x\in\R^n}\{f_0(x)\,:\,\overline{f}(x)\le 0\}\,,
\end{equation}
where 
\begin{equation}\label{eq:alter.const}
    \overline{f}(x)=\max\ \{\ f_1(x),\dots,f_m(x),|a_1^\top x-b_1|,\dots,|a_l^\top x-b_l|\ \}\,.
\end{equation}
Here $a_j^\top$ is the $j$th row of $A$, i.e., $A=\begin{bmatrix}
a_1^\top\\
\dots\\
a_l^\top\end{bmatrix}$. 
If $\overline x$ is an optimal solution for problem \eqref{eq:max.con.problem}, $\overline x$ is also an optimal solution for problem \eqref{eq:CCOP}.

Let 
$g_0(x)\in\partial f_0(x)$,
$\overline{g}(x)\in \partial \overline{f}(x)$ and consider the following method to solve problem \eqref{eq:max.con.problem}:
\begin{align}
\label{alg:sg}
\framebox{
\parbox{9cm}{
\begin{center}
\textbf{SG}
\end{center}
\small
{Initialization: $\varepsilon>0$, $x^{(0)}\in\R^n$.}\\
{For $k =0,1,\dots,K$ do:}
\begin{enumerate}
    \item If $\overline{f}(x^{(k)})\le \varepsilon$, then $x^{(k+1)}:=x^{(k)}-\frac{\varepsilon}{\|g_0(x^{(k)})\|_2^2}g_0(x^{(k)})$;
	\item Else, 
	$x^{(k+1)}:=x^{(k)}-\frac{\overline{f}(x^{(k)})}{\|\overline{g}(x^{(k)})\|_2^2}\overline{g}(x^{(k)})$.	
\end{enumerate}
}}
\end{align}

To guarantee the convergence for method \eqref{alg:sg}, we need the following assumption:
\begin{assumption}
	\label{ass:bound.subgrad}
	The norms of the subgradients of $f_0,f_1,\dots,f_m$ and the values of $f_1,\dots,f_m$ are bounded on any compact subsets of $\R^n$.
\end{assumption}

For every $\varepsilon>0$ and $K\in\N$, let $\mathcal{I}_\varepsilon(K)$ be the set of iterations and  $p^{(K)}_\varepsilon$ be a value  defined as follows:
\begin{equation}
    \mathcal{I}_\varepsilon(K):=\{k\in\{0,1,\dots,K\}:\overline{f}(x^{(k)})\le \varepsilon\}\qquad\text{and}\quad p^{(K)}_\varepsilon:=\min_{k\in \mathcal{I}_\varepsilon(K)}f_0(x^{(k)})\,.
\end{equation} 
The optimal worst-case performance guarantee of method \eqref{alg:sg} is stated in the following theorem:
\begin{theorem}\label{theo:convergence.sg}
Consider problem \eqref{eq:CCOP}.
Let Assumption \ref{ass:bound.subgrad} hold.
If the number of iterations $K$ in method \eqref{alg:sg} is big enough, $K\ge \mathcal{O}(\varepsilon^{-2})$,
then 
\begin{equation}
    \mathcal{I}_\varepsilon(K)\ne\emptyset\quad\text{ and }\quad p^{(K)}_\varepsilon\le p^\star + \varepsilon\,.
\end{equation}
\end{theorem}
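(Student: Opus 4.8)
The plan is to argue by contradiction, tracking the squared distance $d_k:=\|x^{(k)}-x^\star\|_2^2$ to the optimal solution $x^\star$ and showing that a ``no good iterate'' hypothesis forces $d_k$ to decrease by a fixed positive amount at every step, which cannot persist for more than $\mathcal{O}(\varepsilon^{-2})$ iterations. Concretely, I would suppose that no index $k\in\{0,\dots,K\}$ simultaneously satisfies $\overline f(x^{(k)})\le\varepsilon$ and $f_0(x^{(k)})\le p^\star+\varepsilon$; deriving a contradiction from this suffices, since any such index witnesses both $\mathcal I_\varepsilon(K)\neq\emptyset$ and $p^{(K)}_\varepsilon\le p^\star+\varepsilon$.

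First I would analyze the two step types separately. For a feasibility step (case~2, where $\overline f(x^{(k)})>\varepsilon$), expanding the square and using the subgradient inequality $\overline g(x^{(k)})^\top(x^{(k)}-x^\star)\ge\overline f(x^{(k)})-\overline f(x^\star)$ together with $\overline f(x^\star)\le0$ (as $x^\star$ is feasible) yields
\begin{equation}
d_{k+1}\le d_k-\frac{\overline f(x^{(k)})^2}{\|\overline g(x^{(k)})\|_2^2}\le d_k-\frac{\varepsilon^2}{\|\overline g(x^{(k)})\|_2^2}\,.
\end{equation}
For an objective step (case~1, where $\overline f(x^{(k)})\le\varepsilon$), the same expansion with $g_0(x^{(k)})^\top(x^{(k)}-x^\star)\ge f_0(x^{(k)})-p^\star$ gives
\begin{equation}
d_{k+1}\le d_k-\frac{2\varepsilon}{\|g_0(x^{(k)})\|_2^2}\bigl(f_0(x^{(k)})-p^\star\bigr)+\frac{\varepsilon^2}{\|g_0(x^{(k)})\|_2^2}\,.
\end{equation}
Under the contradiction hypothesis any such $\varepsilon$-feasible iterate must have $f_0(x^{(k)})-p^\star>\varepsilon$, so the right-hand side is bounded by $d_k-\varepsilon^2/\|g_0(x^{(k)})\|_2^2$. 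Thus in both cases $d_{k+1}\le d_k-\varepsilon^2/\|g\|_2^2$, where $g$ is the subgradient used at step $k$.

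The hard part will be producing a single uniform bound $G$ on all these subgradient norms, because a priori the iterates need not stay bounded. I would resolve this in two stages. First, the per-step decrement $\varepsilon^2/\|g\|_2^2$ above is nonnegative, so $d_{k+1}\le d_k$ holds at every step irrespective of the subgradient value; by induction the iterates never leave the closed ball $B\bigl(x^\star,\|x^{(0)}-x^\star\|_2\bigr)$, which is compact. Second, on this fixed compact set Assumption~\ref{ass:bound.subgrad} bounds $\|g_0\|_2$ and---since $\partial\overline f$ is the convex hull of the active $\partial f_i$ and the fixed vectors $\pm a_j$---also $\|\overline g\|_2$, by a constant $G<\infty$. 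With this uniform bound the decrement is at least $\varepsilon^2/G^2$ at each of the $K+1$ steps, so summing gives $0\le d_{K+1}\le d_0-(K+1)\varepsilon^2/G^2$, whence $K+1\le G^2\|x^{(0)}-x^\star\|_2^2/\varepsilon^2$. Choosing $K\ge G^2\|x^{(0)}-x^\star\|_2^2/\varepsilon^2=\mathcal O(\varepsilon^{-2})$ contradicts this inequality, so a good iterate exists and both conclusions follow. A closing remark would dispose of the degenerate cases $g_0(x^{(k)})=0$ or $\overline g(x^{(k)})=0$: the former makes $x^{(k)}$ a global minimizer of $f_0$, while the latter forces $\min\overline f>0$ and hence contradicts the feasibility of $x^\star$.
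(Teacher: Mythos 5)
Your proof is correct and is essentially the argument the paper itself defers to: the paper proves Theorem \ref{theo:convergence.sg} only by reference to Nesterov's Theorem 3.2.3, whose proof is exactly this contradiction argument based on the monotone decrease of $\|x^{(k)}-x^\star\|_2^2$ under the ``no good iterate'' hypothesis. Your two-stage handling of boundedness (monotonicity first, then the uniform subgradient bound from Assumption \ref{ass:bound.subgrad} on the resulting compact ball) correctly adapts that proof to the unprojected setting and avoids the circularity one might otherwise worry about.
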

The proof of Theorem \ref{theo:convergence.sg} is similar to the proof of \cite[Theorem 3.2.3]{nesterov2018lectures}.

\section{Dual subgradient method (DSG)} 
\label{sec:dsg}
This section provides an alternative dual subgradient method for \eqref{eq:CCOP}. 
Initially developed by Metel and Takeda in \cite{metel2021primal}, the dual subgradient method involves a single dual variable for problem \eqref{eq:max.con.problem} with the single constraint $\overline{f}(x)\le 0$. 
Our dual subgradient method is a generalization of Metel--Takeda's result and is based on the Lagrangian function of problem \eqref{eq:CCOP}  with several dual variables.

Consider the following Lagrangian function of problem \eqref{eq:CCOP}:
\begin{equation}
    L(x,\lambda,\nu)=f_0(x) + \lambda^\top F(x) +\nu^\top (Ax- b)\,,
\end{equation}
for $x\in\R^n$, $\lambda\in \R_+^m$ and $\nu \in\R^l$.
Here 
\begin{equation}\label{eq:def.max0}
    F(x):=(F_1(x),\dots,F_m(x))\text{ with }F_i(x)=\max\{f_i(x),0\}\,,\, i=1,\dots,m\,.
\end{equation}

The dual of problem \eqref{eq:CCOP} reads as:
\begin{equation}\label{eq:dual.CCOP}
    d^\star = \sup_{\lambda\in\R_+^m,\ \nu\in\R^l} \ \inf_{x\in\R^n} L(x,\lambda,\nu)\,.
\end{equation}
We make the following assumption:
\begin{assumption}\label{ass:strong.dual}
Strong duality holds for primal-dual \eqref{eq:CCOP}-\eqref{eq:dual.CCOP}, i.e., $p^\star=d^\star$ and \eqref{eq:dual.CCOP} has an optimal solution $(\lambda^\star,\nu^\star)$.
\end{assumption}
It implies that $(x^\star,\lambda^\star,\nu^\star)$ is a saddle-point of the Lagrangian function $L$, i.e.,
\begin{equation}\label{eq:saddle.point}
    L(x^\star, \lambda, \nu)\le L(x^\star, \lambda^\star,\nu^\star)=p^\star\le L(x,\lambda^\star,\nu^\star)\,,
\end{equation}
for all $x\in\R^n$, $\lambda\in \R_+^m$ and $\nu \in\R^l$.

Given $C$ as a subset of $\R^n$, denote by $\conv(C)$  the convex hull  generated by $C$, i.e., $\conv(C)=\{\sum_{i=1}^r{t_ia_i}:a_i\in C\,,\,t_i\in[0,1]\,,\,\sum_{i=1}^rt_i=1\}$.

With $z=(x,\lambda,\nu)$, we use the following notation:
\begin{itemize}
    \item $g_0(x)\in\partial f_0(x)$;
\item $g_i(x)\in\partial F_i(x)=\begin{cases}\partial f_i(x)&\text{ if }f_i(x)>0\,,\\
    \text{conv}(\partial f_i(x)\cup \{0\})&\text{ if }f_i(x)=0\,,\\
    \{0\}&\text{ otherwise};
    \end{cases}$
\item $G_x(z)\in\partial_x L(z)=\partial f_0(x)+\sum_{i=1}^m\lambda_i\partial F_i(x)\nonumber+A^\top\nu$;
\item $G_{\lambda}(z)=\nabla_{\lambda} 
L(z)=F(x)$ and  $G_{\nu}(z)=\nabla_{\nu} 
L(z)=Ax-b$;
\item $G(z)=(G_{x}(z),-G_{\lambda}(z),-G_{\nu}(z))$.
\end{itemize}
Setting $z^{(k)}:=(x^{(k)},\lambda^{(k)},\nu^{(k)})$, we consider the following method:
\begin{align}
\label{alg:dsg}
\framebox{
\parbox{8cm}{
\begin{center}
\textbf{DSG}
\end{center}
\small
{Initialization: $z^{(0)}=(x^{(0)},\lambda^{(0)},\nu^{(0)})\in \R^n\times \R^m_+\times \R^l$; 
		$s^{(0)}=0$; $\hat{x}^{(0)}=0$; $\delta_0=0$; $\beta_0=1$.}\\
{For $k =0,1,\dots,K$ do:}
\begin{enumerate}
    \item $s^{(k+1)}=s^{(k)}+\frac{G(z^{(k)})}{\|G(z^{(k)})\|_2}$;
	\item $z^{(k+1)}=z^{(0)}-\frac{s^{(k+1)}}{\beta_{k}}$;	
	\item $\beta_{k+1}=\beta_k+\frac{1}{\beta_k}$;
	\item $\delta_{k+1}=\delta_k+\frac{1}{\|G(z^{(k)})\|_2}$;
	\item $\hat{x}^{(k+1)}=\hat{x}^{(k)}+\frac{x^{(k)}}{\|G(z^{(k)})\|_2}$;
	\item $\overline{x}^{(k+1)}=\delta_{k+1}^{-1}\hat{x}^{(k+1)}$.
\end{enumerate}
}}
\end{align}
Since $G_{\lambda}(z^{(k)})=F(x^{(k)})\ge 0$, it holds that $\lambda^{(k+1)}_i\ge \lambda^{(k)}_i\ge \lambda^{(0)}_i\ge 0$, for $i=1,\dots,m$.

Under Assumption \ref{ass:bound.subgrad}, we  obtaint the convergence rate of order $\mathcal{O}(K^{-1/2})$ in the following theorem:
\begin{theorem}\label{theo:convergence.dsg}
Consider problem \eqref{eq:CCOP}.
Let Assumptions \ref{ass:bound.subgrad} and \ref{ass:strong.dual} hold.
Let $\varepsilon>0$.
If the number of iterations $K$ in method \eqref{alg:dsg} is big enough, $K\ge \mathcal{O}(\varepsilon^{-2})$,
then 
\begin{equation}
    \|F(\overline{x}^{(K+1)})\|_2+\|A\overline{x}^{(K+1)}-b\|_2\le \varepsilon\text{ and }\quad f_0(\overline{x}^{(K+1)})\le p^\star + \varepsilon\,.
\end{equation}
\end{theorem}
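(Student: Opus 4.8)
The plan is to recognise \eqref{alg:dsg} as Nesterov's weighted dual averaging applied to the saddle-point problem for $L$, with Euclidean prox-function $\psi(z)=\tfrac12\|z-z^{(0)}\|_2^2$, and then to convert the resulting regret estimate on the Lagrangian gap into feasibility and objective guarantees at the average point $\overline x^{(K+1)}$. First I would record the elementary identities coming from the update: writing $\gamma_k:=1/\|G(z^{(k)})\|_2$ and $\tilde g^{(k)}:=G(z^{(k)})/\|G(z^{(k)})\|_2$ (a unit vector), steps 4--6 give $\delta_{K+1}=\sum_{k=0}^K\gamma_k$ and $\overline x^{(K+1)}=\delta_{K+1}^{-1}\sum_{k=0}^K\gamma_k x^{(k)}$, so that $\overline x^{(K+1)}$ is a convex combination of the $x^{(k)}$, while step 2 is exactly $z^{(k+1)}=\arg\min_z\{(s^{(k+1)})^\top z+\beta_k\psi(z)\}$.

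The analytic core is a single convexity inequality. Since $L(\cdot,\lambda^{(k)},\nu^{(k)})$ is convex with subgradient $G_x(z^{(k)})$ and $L(x^{(k)},\cdot,\cdot)$ is affine with gradient $(F(x^{(k)}),Ax^{(k)}-b)$, subtracting the two first-order relations yields, for every $z=(x,\lambda,\nu)$,
\[
L(x^{(k)},\lambda,\nu)-L(x,\lambda^{(k)},\nu^{(k)})\le G(z^{(k)})^\top(z^{(k)}-z),
\]
where the sign pattern $G=(G_x,-G_\lambda,-G_\nu)$ is precisely what folds the $x$-descent and $(\lambda,\nu)$-ascent directions into one inner product. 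Next I would invoke the standard dual-averaging estimate underlying \cite{metel2020dual}: because $\psi$ is $1$-strongly convex and each $\tilde g^{(k)}$ is a unit vector, the regret is bounded by a quantity of the form $\beta_K\,\psi(z)+\tfrac12\sum_{k=0}^K 1/\beta_k$, and the recursion $\beta_{k+1}=\beta_k+1/\beta_k$ gives $\beta_k=\Theta(\sqrt k)$ together with the telescoping identity $\sum_{k=0}^K 1/\beta_k=\beta_{K+1}-1$, so the whole right-hand side is $\Theta(\sqrt K)$ for fixed $z$.

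Now I would specialise the comparator to $x=x^\star$. Feasibility gives $F(x^\star)=0$ and $Ax^\star-b=0$, hence $L(x^\star,\lambda^{(k)},\nu^{(k)})=p^\star$ for every $k$, which removes all dependence on $(\lambda^{(k)},\nu^{(k)})$ in the comparator term. Multiplying the convexity inequality by $\gamma_k$, summing, combining with the dual-averaging estimate, dividing by $\delta_{K+1}$, and using convexity of $f_0$ and of each $F_i$ together with the exactness $A\overline x^{(K+1)}-b=\delta_{K+1}^{-1}\sum_k\gamma_k(Ax^{(k)}-b)$ produces the master inequality, valid for all $\lambda\in\R^m_+$, $\nu\in\R^l$,
\[
f_0(\overline x^{(K+1)})-p^\star+\lambda^\top F(\overline x^{(K+1)})+\nu^\top(A\overline x^{(K+1)}-b)\le \frac{\beta_K\,\psi(x^\star,\lambda,\nu)+\tfrac12(\beta_{K+1}-1)}{\delta_{K+1}}.
\]
Taking $\lambda=0,\nu=0$ bounds $f_0(\overline x^{(K+1)})-p^\star$ from above; to control the violation I would substitute $\lambda=\lambda^{(0)}+\tfrac{\delta_{K+1}}{\beta_K}F(\overline x^{(K+1)})$ and the analogous shift in $\nu$, complete the square in the quadratic right-hand side, and bound the surviving linear terms below through the saddle-point inequality \eqref{eq:saddle.point} at $(\lambda^\star,\nu^\star)$, namely $f_0(\overline x^{(K+1)})-p^\star\ge -(\lambda^\star)^\top F(\overline x^{(K+1)})-(\nu^\star)^\top(A\overline x^{(K+1)}-b)$. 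This yields a quadratic inequality in $(\|F(\overline x^{(K+1)})\|_2,\|A\overline x^{(K+1)}-b\|_2)$ forcing both to be of the same order as the right-hand side above.

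Everything then reduces to the order of $\delta_{K+1}=\sum_{k=0}^K 1/\|G(z^{(k)})\|_2$, and this is the step I expect to be the main obstacle. The clean conclusion $\Theta(\sqrt K)/\delta_{K+1}=\mathcal O(K^{-1/2})$, hence $K\ge\mathcal O(\varepsilon^{-2})$, follows only if $\|G(z^{(k)})\|_2$ stays uniformly bounded, equivalently if the iterates $z^{(k)}$ remain in a compact set so that Assumption \ref{ass:bound.subgrad} applies. This is delicate because a priori the dual iterates only satisfy $\|\lambda^{(k)}-\lambda^{(0)}\|_2,\|\nu^{(k)}-\nu^{(0)}\|_2=\mathcal O(\sqrt k)$ (each normalised increment has norm at most one), and through $G_x=g_0+\sum_i\lambda_i g_i+A^\top\nu$ this growth feeds back into $\|G(z^{(k)})\|_2$. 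The resolution is a bootstrapping argument: the same master inequality shows the weighted averages of $F(x^{(k)})$ and $Ax^{(k)}-b$ decay like $K^{-1/2}$, which keeps $\lambda^{(k)},\nu^{(k)}$ bounded and closes the loop, giving $\delta_{K+1}=\Omega(K)$. Making this feedback rigorous — rather than the two routine convexity and dual-averaging estimates — is where the real work lies, and it is exactly the point adapted from \cite{metel2020dual}.
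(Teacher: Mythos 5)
Your rate analysis is essentially the paper's: the master inequality obtained from the dual-averaging regret bound plus the convexity/linearity inequality for $L$, the comparator $z=z^\star$ for the objective gap, and a shifted dual comparator for feasibility reproduce the structure of the paper's Lemma \ref{convergence} (the paper perturbs the averaged multipliers by the unit vectors $F(\overline{x}^{(K+1)})/(2\|F(\overline{x}^{(K+1)})\|_2)$ and $(A\overline{x}^{(K+1)}-b)/(2\|A\overline{x}^{(K+1)}-b\|_2)$ rather than completing a square, but this is cosmetic). The genuine gap is exactly the step you flag as ``where the real work lies'': boundedness of the iterates, equivalently $\delta_{K+1}=\Omega(K)$. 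Your proposed bootstrap does not close. Even granting that the master inequality, with a suitable comparator, keeps the dual iterates $\lambda^{(k)},\nu^{(k)}$ bounded, it says nothing about the primal iterates $x^{(k)}$, which a priori drift at rate $\sqrt{k}$ just as the duals do; and without primal boundedness you cannot invoke Assumption \ref{ass:bound.subgrad} (which bounds subgradients only on compact sets) to produce the uniform constant $C$ with $\|G(z^{(k)})\|_2\le C$. Your claim that the violations decay ``like $K^{-1/2}$'' already presupposes $\delta_{K+1}=\Omega(K)$, which is what the loop was supposed to deliver, so as sketched the argument is circular on the primal side.

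The paper needs no bootstrap. Its Lemma \ref{bounded} proves $\|z^{(k)}-z^\star\|_2\le\|z^{(0)}-z^\star\|_2+1$ for all $k$ directly: convexity of $L$ in $x$, linearity in $(\lambda,\nu)$, and the saddle-point property \eqref{eq:saddle.point} give the pointwise monotonicity inequality
\begin{equation}
0\le (G_x(z^{(k)}),-G_\lambda(z^{(k)}),-G_\nu(z^{(k)}))^\top (z^{(k)}-z^\star)\,,\nonumber
\end{equation}
which holds at every iterate with no smallness of constraint violations required; plugging this into the telescoped dual-averaging estimate \eqref{constart}, using that all steps are normalized to unit norm and that $U^{s}_{\beta}$ is $\beta$-strongly concave, yields the uniform ball containing all iterates. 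Only afterwards does Lemma \ref{gradbound} apply Assumption \ref{ass:bound.subgrad} on that compact set to get $\|G(z^{(k)})\|_2\le C$, hence $\delta_{K+1}\ge (K+1)/C$, and Lemma \ref{convergence} then gives the $\mathcal{O}(K^{-1/2})$ rate and the theorem. This direct boundedness argument --- saddle-point monotonicity combined with normalized weighted dual averaging, inherited from Nesterov and Metel--Takeda --- is the missing idea in your proposal.
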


The proof of Theorem \ref{theo:convergence.dsg}, based on Lemma  \ref{convergence}, proceeds the same as the proofs in \cite{metel2021primal}.

\section{Primal-dual subgradient method (PDS)}
\label{sec:pds}
This section extends the primal-dual subgradient method introduced in Boyd's lecture notes \cite[Section 8]{boyd2014subgradient}.
The idea of our method is to replace the augmentation of the augmented Lagrangian considered in \cite[Section 8]{boyd2014subgradient} with a more general penalty term.
We also prove the convergence guarantee and provide a convergence rate order for this method.

Let $s\in[1,2]$ and $\rho>0$ be fixed.
With $F(x)$ defined as in \eqref{eq:def.max0}, consider an equivalent problem of \eqref{eq:CCOP}:
\begin{equation}\label{eq:modified.augmented.CCOP}
\begin{array}{rl}
    p^\star=\inf\limits_{x\in\R^n}&f_0(x) + \rho(\|F(x)\|_2^s+\| Ax- b\|_2^s)\\
    \text{s.t.}&  F_i(x)\le 0\,,\,i=1,\dots,m\,,\,  Ax= b\,.
    \end{array}
\end{equation}
Since $x^\star$ is an optimal solution for problem \eqref{eq:CCOP},  $x^\star$ is also an optimal solution for problem \eqref{eq:modified.augmented.CCOP}.
\begin{remark}\label{re:diff.augmentation}
Instead of using the augmentation with the  square of $l_2$-norm  $\|\cdot\|_2^2$ (according to the definition of the augmented problem in \cite[Section 8]{boyd2014subgradient}) we use the  additional penalty term with $l_2$-norm to the $s$th power $\|\cdot\|_2^s$ in problem \eqref{eq:modified.augmented.CCOP}.
This strategy has been discussed in \cite[pp. 513]{nocedal2006numerical} for the case of $s=1$.
\end{remark}

The Lagrangian of problem \eqref{eq:modified.augmented.CCOP} is of form:
\begin{equation}
    L_\rho(x,\lambda,\nu)=f_0(x) + \lambda^\top  F(x) +\nu^\top ( Ax- b)+ \rho(\| F(x)\|_2^s+\| Ax- b\|_2^s)\,,
\end{equation}
for $x\in\R^n$, $\lambda\in \R_+^m$ and $\nu \in\R^l$.

Let us define a set-valued mapping $T_\rho:\R^n\times\R^m_+\times \R^l\to  2^{\R^n\times\R^m_+\times \R^l}$ by
\begin{equation}
\begin{array}{rl}
    T_\rho(x,\lambda,\nu)=&
    \partial_x L_\rho(x,\lambda,\nu)\times
    (-\partial_\lambda L_\rho(x,\lambda,\nu))\times
    (-\partial_\nu L_\rho(x,\lambda,\nu))\\
    =&\partial_x L_\rho(x,\lambda,\nu)\times
     \{-F(x)\}\times
    \{b-Ax\}\,,
    \end{array}
\end{equation}
where $\partial_x L_\rho(x,\lambda,\nu)=\partial f_0(x)+\sum_{i=1}^m\lambda_i \partial  F_i(x)  + A^\top \nu +\rho\partial  \|F_i(\cdot)\|_2^s(x) +  \rho\partial  \|A\cdot-b\|_2^s(x)$.

The explicit formulas of the subgradients in the subdifferentials $\partial  \|F_i(\cdot)\|_2^s(x)$ and $\partial  \|A\cdot-b\|_2^s(x)$ are provided in Appendix \ref{app:subgrad.comp}.

We do the simple iteration:
\begin{equation}\label{eq:sheme}
    z^{(k+1)}=z^{(k)} - \alpha_k T^{(k)}\,,
\end{equation}
where $z^{(k)}=(x^{(k)},\lambda^{(k)},\nu^{(k)})$ is the $k$th iterate of the primal and dual variables, $T^{(k)}$ is any element of $T_{\rho}(z^{(k)})$, $\alpha_k > 0$ is the $k$th step size.

By expanding \eqref{eq:sheme} out, we can also write the method as:
\begin{align}
\label{alg:PDS}
\framebox{
\parbox{11cm}{
\begin{center}
\textbf{PDS}
\end{center}
\small
{Initialization: $x^{(0)}\in\R^n$, $\lambda^{(0)}\in\R^m_+$, $\nu^{(0)}\in\R^l$ and $(\alpha_k)_{k\in\N}\subset\R_+$.}\\
{For $k=0,1,\dots,K$ do:}
\begin{enumerate}
\item $\varrho^{(k)}=\begin{cases}
 s \| F^{(k)}\|_2^{s-2} F^{(k)}&\text{ if }F^{(k)}\ne 0\,,\\
0&\text{ otherwise;}
\end{cases}$
\item $\varsigma^{(k)}=\begin{cases}
 s\|Ax^{(k)}-b\|_2^{s-2}(Ax^{(k)}-b)&\text{ if }Ax^{(k)}\ne b\\
0 &\text{ otherwise;}
\end{cases}
$
    \item $x^{(k+1)}=x^{(k)} - \alpha_k [g_0^{(k)}+\sum_{i=1}^m(\lambda_i^{(k)}+ \rho\varrho^{(k)}_i) g_i^{(k)}+A^\top (\nu^{(k)} +  \rho \varsigma^{(k)})]$;
    \item $\lambda^{(k+1)}=\lambda^{(k)} +\alpha_k F^{(k)}\text{ and }\nu^{(k+1)}=\nu^{(k)} +\alpha_k (Ax^{(k)}-b)$.
\end{enumerate}
}}
\end{align}
Here we note:
\begin{itemize}
    \item $g_0^{(k)}\in \partial f_0 (x^{(k)})$, $g_i^{(k)}\in \partial F_i(x^{(k)})$ and $F_i^{(k)}:=F_i(x^{(k)})$, $i=1,\dots,m$.
    \item $T^{(k)}=\begin{bmatrix}
    g_0^{(k)}+\sum_{i=1}^m(\lambda_i^{(k)}+ \rho\varrho^{(k)}_i) g_i^{(k)}+A^\top (\nu^{(k)} +  \rho \varsigma^{(k)})\\
    -F^{(k)}\\
    b-Ax^{(k)}
    \end{bmatrix}
    $.
\end{itemize}
Note that $\lambda^{(k)}\ge 0$ since $F_i^{(k)}\ge 0$, $i=1,\dots,m$.
The case of $s=2$ is the standard primal-dual subgradient method in \cite[Section 8]{boyd2014subgradient}.

For every $\varepsilon>0$ and $K\in\N$, let
\begin{equation}
    \mathcal{I}_\varepsilon(K):=\{k\in\{0,1,\dots,K\}\,:\,\|F(x^{(k)})\|_2+\|Ax^{(k)}-b\|_2\le \varepsilon\}
\end{equation} and 
\begin{equation}
    p^{(K)}_\varepsilon:=\min_{k\in \mathcal{I}_\varepsilon(K)}f_0(x^{(k)})\,.
\end{equation}

We state that the method \eqref{alg:PDS} converges in the following theorem:
\begin{theorem}\label{theo:convergence}
Consider problem \eqref{eq:CCOP}.
Let Assumptions \ref{ass:bound.subgrad} and \ref{ass:strong.dual} hold.
Let $\delta\in(0,1)$.
Let the step size rule
\begin{equation}
    \alpha_k=\frac{\gamma_k}{\|T^{(k)}\|_2}\quad\text{ with }\quad\gamma_k=(k+1)^{-1+\delta/2}\,.
\end{equation}
Let $\varepsilon>0$.
If the number of iterations $K$ in method \eqref{alg:PDS} is big enough, $K\ge \mathcal{O}(\varepsilon^{-2s/\delta})$,
then 
\begin{equation}
    \mathcal{I}_\varepsilon(K)\ne\emptyset\quad\text{ and }\quad p^{(K)}_\varepsilon\le p^\star + \varepsilon\,.
\end{equation}
\end{theorem}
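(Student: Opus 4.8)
The plan is to follow the standard subgradient-convergence template: track the squared distance from the current iterate $z^{(k)}=(x^{(k)},\lambda^{(k)},\nu^{(k)})$ to a fixed saddle point $z^\star=(x^\star,\lambda^\star,\nu^\star)$ guaranteed by Assumption \ref{ass:strong.dual}, exploit monotonicity of the subgradient operator $T_\rho$, and then argue by contradiction that the feasibility indicator set $\mathcal{I}_\varepsilon(K)$ cannot stay empty while $\gamma_k$ is not summable but $\gamma_k^2$ is. First I would prove the key \emph{monotonicity inequality}
\begin{equation}
\spro{T^{(k)},\,z^{(k)}-z^\star}\ \ge\ L_\rho(x^{(k)},\lambda^\star,\nu^\star)-L_\rho(x^\star,\lambda^{(k)},\nu^{(k)})\ \ge\ f_0(x^{(k)})-p^\star+\rho\bigl(\|F(x^{(k)})\|_2^s+\|Ax^{(k)}-b\|_2^s\bigr),
\end{equation}
using convexity of $L_\rho(\cdot,\lambda,\nu)$ in $x$ together with linearity of $L_\rho(x,\cdot,\cdot)$ in $(\lambda,\nu)$ and the sign choices $G_\lambda=-F$, $G_\nu=b-Ax$ baked into $T^{(k)}$; the final step uses $\lambda^\star\ge 0$, $F(x^\star)\le 0$ and $Ax^\star=b$ exactly as in the saddle-point relation \eqref{eq:saddle.point}.

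Next I would expand the basic recursion $\|z^{(k+1)}-z^\star\|_2^2=\|z^{(k)}-z^\star\|_2^2-2\alpha_k\spro{T^{(k)},z^{(k)}-z^\star}+\alpha_k^2\|T^{(k)}\|_2^2$. Substituting the step size $\alpha_k=\gamma_k/\|T^{(k)}\|_2$ turns the last term into $\gamma_k^2$ and the cross term into $-2\gamma_k\spro{T^{(k)},z^{(k)}-z^\star}/\|T^{(k)}\|_2$. Telescoping from $k=0$ to $K$ and using nonnegativity of the left-hand side yields
\begin{equation}
2\sum_{k=0}^{K}\frac{\gamma_k}{\|T^{(k)}\|_2}\,\spro{T^{(k)},z^{(k)}-z^\star}\ \le\ \|z^{(0)}-z^\star\|_2^2+\sum_{k=0}^{K}\gamma_k^2.
\end{equation}
Because $\gamma_k=(k+1)^{-1+\delta/2}$ with $\delta\in(0,1)$, the series $\sum\gamma_k^2$ converges while $\sum\gamma_k$ diverges at rate $\Theta(K^{\delta/2})$, and under Assumption \ref{ass:bound.subgrad} the norms $\|T^{(k)}\|_2$ stay bounded by a constant $M$ on the compact region the iterates occupy.

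Then I would run the contradiction argument. Suppose $\mathcal{I}_\varepsilon(K)=\emptyset$, so $\|F(x^{(k)})\|_2+\|Ax^{(k)}-b\|_2>\varepsilon$ for every $k\le K$; combined with the monotonicity inequality and the (routine) bound $f_0(x^{(k)})-p^\star\ge -c\bigl(\|F(x^{(k)})\|_2+\|Ax^{(k)}-b\|_2\bigr)$ coming from boundedness of the subgradients, the per-step inner product is bounded below by a positive multiple of $\varepsilon^s$ once the penalty weight $\rho$ dominates the linear deficit, forcing $\sum_k \gamma_k/\|T^{(k)}\|_2$ to be $\Theta(\varepsilon^s K^{\delta/2}/M)$ on the left; comparing with the bounded right-hand side gives $K^{\delta/2}\le \mathcal{O}(\varepsilon^{-s})$, i.e. $K\le\mathcal{O}(\varepsilon^{-2s/\delta})$, the contrapositive of which is the claimed iteration count. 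Once $\mathcal{I}_\varepsilon(K)\ne\emptyset$ is secured, a parallel estimate on $f_0(x^{(k)})-p^\star$ at a feasible-enough index delivers $p^{(K)}_\varepsilon\le p^\star+\varepsilon$.

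The main obstacle, and the step I would be most careful about, is making the penalty mechanism do real work: for $s\in[1,2)$ the term $\rho\|F(x^{(k)})\|_2^s$ is only comparable to $\varepsilon^s$ rather than $\varepsilon^2$, and one must verify that choosing $\rho$ large enough (relative to the uniform subgradient bound of $f_0$) makes the net lower bound on $\spro{T^{(k)},z^{(k)}-z^\star}$ strictly positive and proportional to $\varepsilon^s$, so that the $s$-dependence propagates correctly into the exponent $2s/\delta$. The subtlety is that $\|\cdot\|_2^s$ is nonsmooth at the origin, so the subgradient formulas $\varrho^{(k)},\varsigma^{(k)}$ from Appendix \ref{app:subgrad.comp} must be invoked precisely at the infeasible iterates (where $F^{(k)}\ne 0$), and one must confirm that the $0$-branch on feasible iterates never undercuts the penalty lower bound. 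Everything else is the standard diverging-but-square-summable step-size bookkeeping.
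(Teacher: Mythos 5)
Your overall template (distance recursion, telescoping, square-summable but non-summable steps, penalty term producing the exponent $2s/\delta$, argmin/contradiction bookkeeping) is the same as the paper's, but the inequality you build everything on contains a genuine error. Writing out the middle term of your chain,
\begin{equation}
L_\rho(x^{(k)},\lambda^\star,\nu^\star)-L_\rho(x^\star,\lambda^{(k)},\nu^{(k)})
= f_0(x^{(k)})-p^\star+\lambda^{\star\top}F(x^{(k)})+\nu^{\star\top}(Ax^{(k)}-b)
+\rho\bigl(\|F(x^{(k)})\|_2^s+\|Ax^{(k)}-b\|_2^s\bigr)\,,
\end{equation}
your second inequality silently discards $\lambda^{\star\top}F(x^{(k)})+\nu^{\star\top}(Ax^{(k)}-b)$. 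The term $\lambda^{\star\top}F(x^{(k)})$ is indeed nonnegative, but $\nu^{\star\top}(Ax^{(k)}-b)$ has no sign, so this step is invalid. The paper's proof (Lemma \ref{lem:theo:convergence}) instead keeps these dual products grouped with $f_0(x^{(k)})-p^\star$: that group equals $L(x^{(k)},\lambda^\star,\nu^\star)-L(x^\star,\lambda^\star,\nu^\star)$, which is nonnegative by the saddle-point relation \eqref{eq:saddle.point}, yielding $\spro{T^{(k)},z^{(k)}-z^\star}\ge \rho s\,(\|F(x^{(k)})\|_2^s+\|Ax^{(k)}-b\|_2^s)\ge 0$ for \emph{every} fixed $\rho>0$. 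This single correction removes all of your obstacles at once: it gives the per-step nonnegativity needed both to bound the iterates (hence $\|T^{(k)}\|_2\le C$ via Assumption \ref{ass:bound.subgrad}) and to discard partial sums, and it makes the per-step inner product at an $\varepsilon$-infeasible iterate at least a positive multiple of $\varepsilon^s$ with no condition on $\rho$.

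Your attempted repair --- ``choose $\rho$ large so the penalty dominates the linear deficit'' --- cannot work. First, $\rho>0$ is fixed in advance in problem \eqref{eq:modified.augmented.CCOP} and in the theorem; it is not at the prover's disposal. Second, even granting large $\rho$, for $s\in(1,2]$ the function $t\mapsto\rho t^s-ct$ is negative for all $0<t<(c/\rho)^{1/(s-1)}$, so on iterates whose infeasibility barely exceeds $\varepsilon$ your lower bound is negative rather than $\Theta(\varepsilon^s)$ once $\varepsilon$ is small --- the contradiction argument collapses exactly in the regime the theorem addresses. Third, the ``routine'' bound $f_0(x^{(k)})-p^\star\ge -c(\|F(x^{(k)})\|_2+\|Ax^{(k)}-b\|_2)$ does \emph{not} follow from boundedness of subgradients (that would require an error bound relating constraint violation to distance from the feasible set, which is unavailable for general convex constraints); under Assumption \ref{ass:strong.dual} it follows from the dual optimal pair with $c=\max\{\|\lambda^\star\|_2,\|\nu^\star\|_2\}$, i.e., from the very saddle-point inequality whose proper use would have made your splitting unnecessary. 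Finally, the optimality half of the claim is left dangling: you need a single index that is simultaneously $\varepsilon$-feasible and $\varepsilon$-optimal. The paper achieves this by evaluating everything at $i_k\in\argmin_{i\le k}T^{(i)\top}(z^{(i)}-z^\star)$, so that smallness of that one inner product forces both the penalty terms (feasibility at rate $(k+1)^{-\delta/(2s)}$) and the Lagrangian gap (optimality up to $\xi_{i_k}=\|\lambda^\star\|_2\|F(x^{(i_k)})\|_2+\|\nu^\star\|_2\|Ax^{(i_k)}-b\|_2$) to be small at the same iterate; a ``parallel estimate at a feasible-enough index'' does not identify such an index.
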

The proof of Theorem \ref{theo:convergence} is based on Lemma \ref{lem:theo:convergence}.
This proof is similar to the convergence proof of the (standard) primal-dual subgradient method in \cite[Section 8]{boyd2014subgradient}.
A mathematical mistake in the proof in \cite[Section 8]{boyd2014subgradient} is corrected.



\section{Numerical experiments}
\label{sec:experiments}
In this section, we report the results of numerical experiments obtained by solving convex optimization problems (COPs) with functional constraints. 
The experiments are performed in Python 3.9.1. 
The implementation of methods \eqref{alg:sg}, \eqref{alg:dsg} and \eqref{alg:PDS} is available online via the link:
\begin{center}
    \href{https://github.com/dinhthilan/COP}{{\bf https://github.com/dinhthilan/COP}}.
\end{center}

We use a desktop computer with an Intel(R) Pentium(R) CPU N4200 @ 1.10GHz and 4.00 GB of RAM. 
The notation for the numerical results is given in Table \ref{tab:nontation}.
\begin{table}
    \caption{\small Notation}
    \label{tab:nontation}
\small
\begin{center}
\begin{tabular}{|m{1.5cm}|m{9cm}|}
\hline
$n$&the number of variables of the COP\\
\hline
$m$&the number of inequality constraints of the COP\\
\hline
$l$&the number of equality constraints of the COP\\
\hline
SG &the COP 
solved by the subgradient method \eqref{alg:sg}\\
\hline
SingleDSG &the COP 
solved by the dual subgradient method with single dual variable \cite[Algorithm 1]{metel2021primal}\\
\hline
MultiDSG &the COP 
solved by the dual subgradient method with multi-dual-variables \eqref{alg:dsg}\\
\hline
PDS &the COP 
solved by the primal-dual subgradient method \eqref{alg:PDS}\\
\hline
$s$ & the power of $l_2$-norm in the additional penalty term for PDS \\
\hline
$\rho$ & the penalty coefficients  for PDS\\
\hline
val& the approximate optimal value for the COP\\
\hline
val$^\star$& the exact optimal value for the COP\\
\hline
gap& the relative optimality gap w.r.t. the exact value val$^\star$, i.e.,

$\text{gap}=|\text{val}-\text{val}^\star|/{(1+\max\{|\text{val}^\star|,|\text{val}|\})}$\\
\hline 
infeas & the infeasibility of the approximate optimal solution\\
\hline
time & the running time in seconds\\
\hline
$\varepsilon$ & the desired accuracy of the approximate solution \\
\hline
$K$ & the number of iterations \\
\hline

\end{tabular}    
\end{center}
\end{table}
The value and the infeasibility of SG, SingleDSG, MultiDSG, and PDS at the $k$th iteration are computed as in Table \ref{tab:val.infeas}.
\begin{table}
    \caption{\small The value and the infeasitility at the $k$th iteration.}
    \label{tab:val.infeas}
\footnotesize
\begin{center}
   \begin{tabular}{|c|c|c|c|}
        \hline
        Method& Complexity&
        val&
        infeas\\
\hline
SG& $\mathcal{O}(\varepsilon^{-2})$ &$f_0(x^{(k)})$ & $\max\{\overline{f}(x^{(k)}),0\}$\\
\hline
SingleDSG& $\mathcal{O}(\varepsilon^{-2})$ &$f_0(\overline{x}^{(k)})$& $\max\{\overline{f}(\overline{x}^{(k)}),0\}$\\
\hline
MultiDSG& $\mathcal{O}(\varepsilon^{-2})$ &$f_0(\overline{x}^{(k)})$ &$\|F(\overline{x}^{(k)})\|_2+\|A\overline{x}^{(k)}-b\|_2$\\
\hline
PDS&$\mathcal{O}(\varepsilon^{-2r})\,,\,\forall\ r>1$& $f_0(x^{(k)})$& $\|F({x}^{(k)})\|_2+\|A{x}^{(k)}-b\|_2$  \\
\hline
\end{tabular}    
\end{center}
\end{table}

\subsection{Randomly generated test problems}
We construct randomly generated test problems in  form:
\begin{equation}\label{eq:def.random}
    \min_{x\in\R^n}\{ c^{\top}x\ :\ x \in \Omega\,,\,Ax=b\}\,,
\end{equation}
where $c\in\R^n$, $A\in\R^{l\times n}$, $b\in\R^l$, and $\Omega$ is a convex domain such that:
\begin{itemize}
    \item  Every entry of $c$ and $A$ is taken in $[-1,1]$ with uniform distribution.
    \item The domain $\Omega$ is chosen in the following two cases:
    \begin{itemize}
        \item Case 1: $\Omega:=\{x\in\R^n:\|x\|_1\le 1\}$.
        \item Case 2: $\Omega:=\{x\in[-1,1]^n: \max\{-\log(x_1+1),x_2\}\le 1\}$.
    \end{itemize}
    \item With a random point $\overline x$ in $\Omega$, we take $b:=A\overline x$.
\end{itemize}
Let us apply SG, SingleDSG, multiDSG, and PDS to solve problem \eqref{eq:def.random}.
The size of the test problems and the setting of our software are given in Table \ref{tab:random.prob}.
\begin{table}
\caption{\small Randomly generated test problems.}
    \label{tab:random.prob}
    {\small\begin{itemize}
        \item Setting: $l=\lceil n/4 \rceil$ $\varepsilon=10^{-3}$, $\rho=1/s$.
    \end{itemize}}
\small
\begin{center}
   \begin{tabular}{|c|c|c|c|c|c|c|}
        \hline
        \multirow{2}{*}{Id}&
        \multirow{2}{*}{Case}&
        \multirow{2}{*}{$\delta$}&
        \multirow{2}{*}{$K$}& 
        \multicolumn{3}{c|}{Size}\\
\cline{5-7}
  & & &  &$n$ & $m$ & $l$\\
\hline  
1&1&0.5&$10^4$ & 10 &1 & 2 \\ \hline
2&1& 0.5& $10^5$ &100& 1 &15 \\  \hline
3&1&0.99&$10^6$ &1000 & 1& 143\\ \hline
4&2&0.5& $10^4$&  10 & 21 & 2 \\  \hline
5&2&0.5&$10^5$ &$100$&201&15\\  \hline
6&2 &0.99&$10^5$&$1000$ &2001& 143 \\
\hline
\end{tabular}    
\end{center}
\end{table}
The numerical results are displayed in Table \ref{tab:random.result}.
\begin{table}
\caption{\small Numerical results  for randomly generated test problems}
    \label{tab:random.result}
\small
\begin{center}
   \begin{tabular}{|c|c|c|c|c|c|c|c|c|c|}
        \hline
        \multirow{2}{*}{Id}&
        \multicolumn{3}{c|}{SG} & \multicolumn{3}{c|}{SingleDSG}&      \multicolumn{3}{c|}{MultiDSG} \\ \cline{2-10}
  & val& infeas &time&
val&  infeas &time &
val & infeas & time\\
\hline  
1 & -0.6363 & 0.2584 & 1 & -0.9609 & 0.0806 & 1 &-0.8385 & 0.0140&1\\ 
\hline
2& 0.2711& 0.2389&19 &-0.9672&0.1832&35&-0.9003&0.0177&19\\ \hline
3& -0.0032 & 0.0849 &2356 & -0.9938 & 0.0748 & 4704 & -0.8649 & 0.0153 & 1380 \\ \hline
4& -2.2255 &2.3955 & 1& -5.7702 & 2.0534 & 2 & -4.1255 & 0.0056& 5 \\ \hline
5&7.6548&7.7407&432&-141.43&7.0433&84&-37.712& 0.0097&575\\ \hline
6&-1.4176&38.806&1079&-316.62&35.684&2382&-401.07&0.0389&8059\\
        \hline
        \hline
        \multirow{2}{*}{Id}&
        \multicolumn{3}{c|}{PDS with $s=1$} & \multicolumn{3}{c|}{PDS with $s=1.5$ }&      \multicolumn{3}{c|}{PDS with $s=2$} \\ \cline{2-10}
  & val& infeas &time&
val&  infeas &time &
val & infeas & time\\
\hline  
1 & -0.8213 & 0.0007 & 1 & -0.8226 & 0.0034 & 1 &-0.8209 & 0.0003&1\\ \hline
2& -0.8830& 0.0008&20&-0.9004&0.0957&20&-0.8913&0.0262&20\\ \hline
3& -0.8550 & 0.0241 &2434 & -0.8428 & 0.0043 & 2673 & -0.8430 & 0.0046 & 2424 \\ \hline
4& -4.0708 &0.0025 & 2& -4.1017 & 0.0029 & 2 & -4.1037 & 0.0041& 2 \\ \hline
5&-37.022&0.0009&404&-37.453&0.0214&416&-37.506&0.0451&413\\ \hline
6&-398.71&0.0435&4802&-399.63&0.0475&4889&-399.76&0.0323&4602\\ 
\hline
\end{tabular}    
\end{center}
\end{table}
The convergences of SG, SingleDSG, MultiDSG, and PDS with $s\in\{1,1.5,2\}$ are illustrated in Figures \ref{fig:case1.n10}, \ref{fig:case1.n100}, \ref{fig:case1.n1000} for Case 1 and Figures \ref{fig:case2.n10}, \ref{fig:case2.n100}, \ref{fig:case2.n1000} for Case 2.
\begin{center}
    \begin{figure}
  {\includegraphics[width=.5\linewidth]{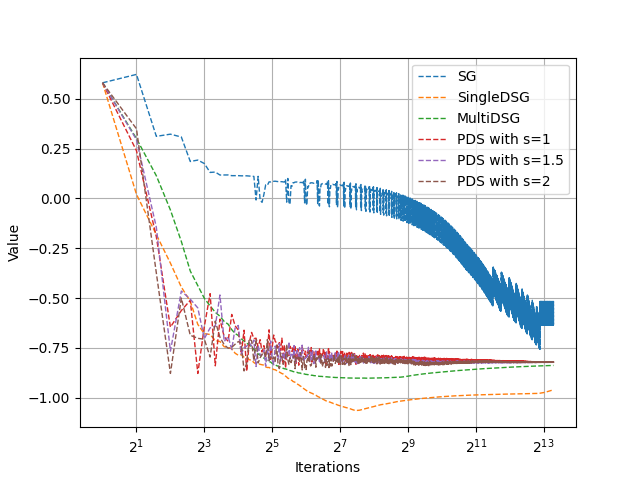}}\hfill
  {\includegraphics[width=.5\linewidth]{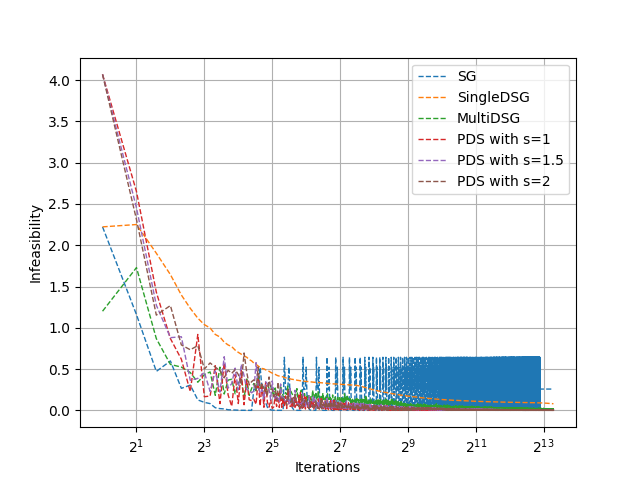}}\hfill
\caption{Illustration for Case 1 with $n=10$ (Id 1).}
\label{fig:case1.n10}
  {\includegraphics[width=.5\linewidth]{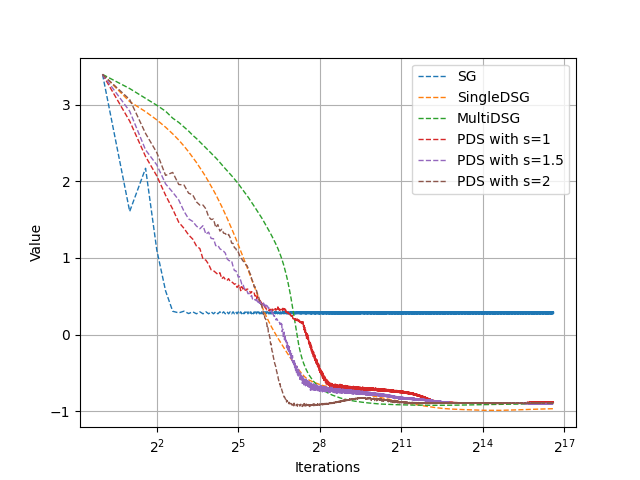}}\hfill
  {\includegraphics[width=.5\linewidth]{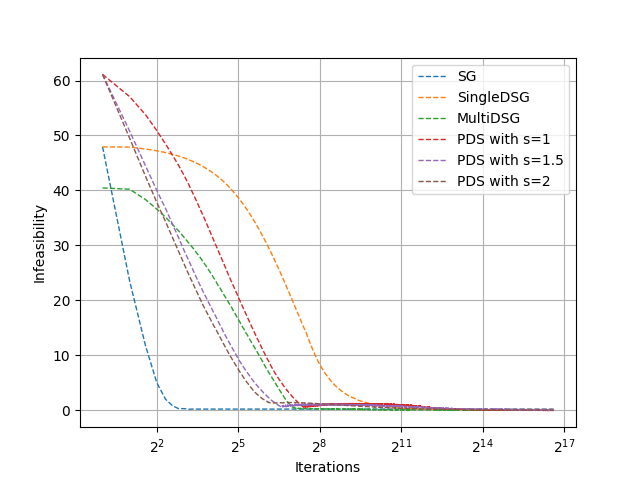}}\hfill
\caption{Illustration for Case 1 with $n=100$ (Id 2).}
\label{fig:case1.n100}
  {\includegraphics[width=.5\linewidth]{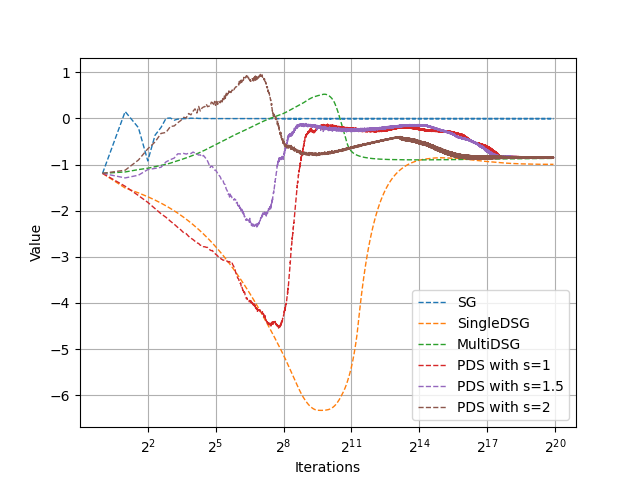}}\hfill
  {\includegraphics[width=.5\linewidth]{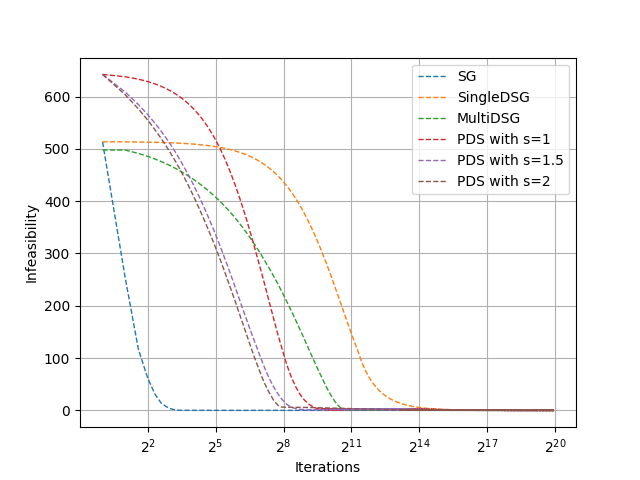}}\hfill
\caption{Illustration for Case 1 with $n=1000$ (Id 3).}
\label{fig:case1.n1000}
\end{figure}
\end{center}
\begin{center}
    \begin{figure}
  {\includegraphics[width=.5\linewidth]{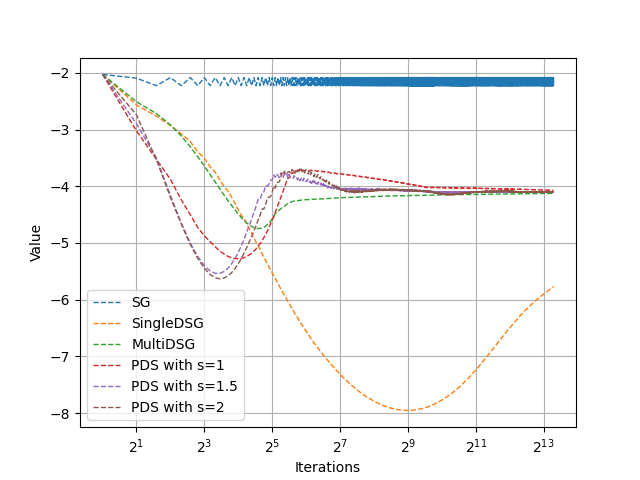}}\hfill
  {\includegraphics[width=.5\linewidth]{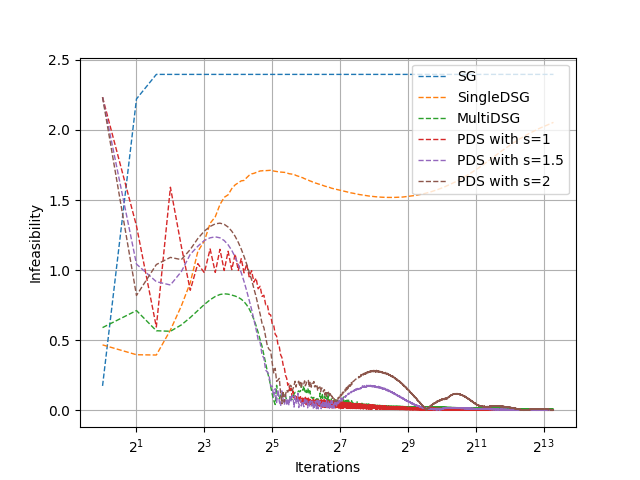}}\hfill
\caption{Illustration for Case 2 with $n=10$ (Id 4).}
\label{fig:case2.n10}
  {\includegraphics[width=.5\linewidth]{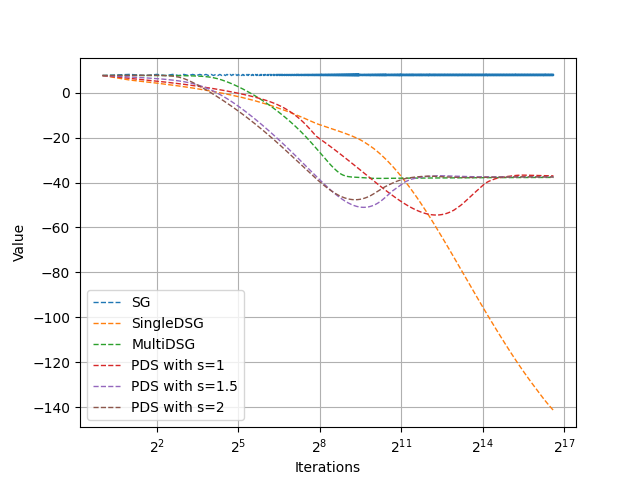}}\hfill
  {\includegraphics[width=.5\linewidth]{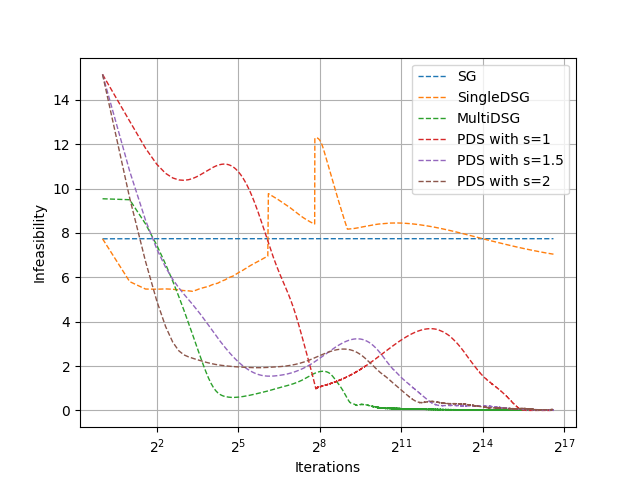}}\hfill
\caption{Illustration for Case 2 with $n=100$ (Id 5).}
\label{fig:case2.n100}
  {\includegraphics[width=.5\linewidth]{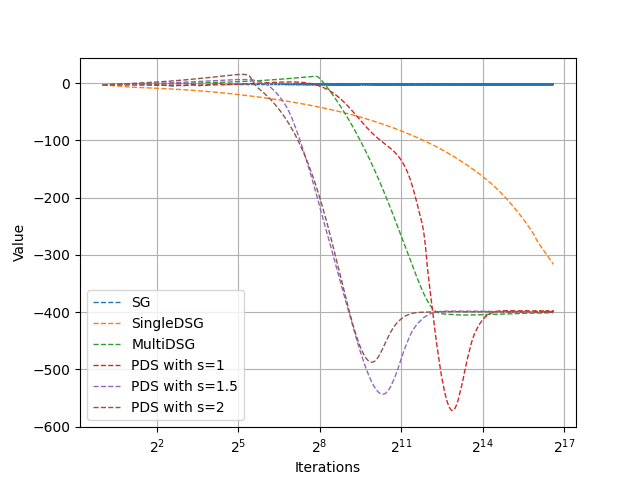}}\hfill
  {\includegraphics[width=.5\linewidth]{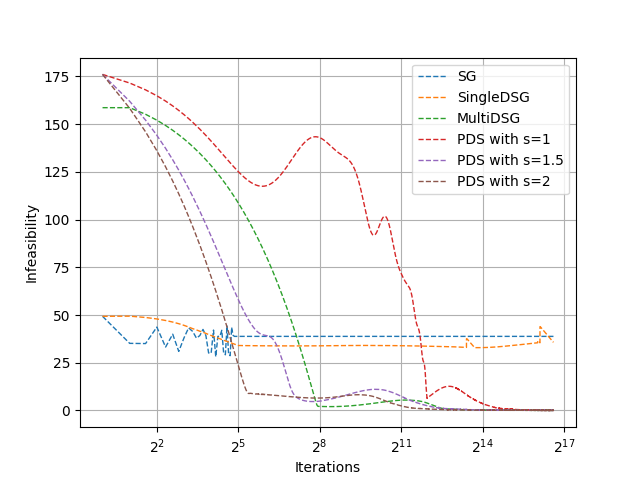}}\hfill
\caption{Illustration for Case 2 with $n=1000$ (Id 6).}
\label{fig:case2.n1000}
\end{figure}
\end{center}
These figures show that:
\begin{itemize}
    \item In Case 1, the values returned by SingleDSG, MultiDSG, and PDS with $s\in\{1,1.5,2\}$ have the same limit when the number of iterations increases except Id 1. 
In Id 1, the values returned by SingleDSG have a limit, which is not the same as one of the values returned by the other.
It is worth pointing out that the value of PDS with $s=2$ has the fastest convergence rate.
In this case, the infeasibilities of all methods converge to zero, and the infeasibility of SG has the fastest convergence rate.
\item In Case 2, the values returned by  MultiDSG and PDS with $s\in\{1,1.5,2\}$ have the same limit, while the values provided by SG and SingleDSG converge to different limits when the number of iterations increases. 
Moreover, the value of MultiDSG has the fastest convergence rate.
We observe similar behavior when considering the infeasibilities of these methods. 
\end{itemize}

For timing comparison from Table \ref{tab:random.result}, SingleDSG is the slowest in Case 1 while MultiDSG is the fastest one in this case.
Nevertheless, MultiDSG takes the most consuming time in Case 2.

\subsection{Linearly inequality constrained minimax problems}

We solve test problems MAD8, Wong2, and Wong3 from \cite[Table 4.1]{lukvsan2000test} using SG, SingleDSG, multiDSG, and PDS.

The size of the test problems and the setting of our software are given in Table \ref{tab:prob.minimax}.
\begin{table}
    \caption{\small Linearly inequality constrained minimax test problems.}
    \label{tab:prob.minimax}
    {\small\begin{itemize}
        \item Setting:  $l=0$, $\varepsilon=10^{-3}$, $\delta=0.5$, $\rho=1/s$, $K=10^5$.
    \end{itemize}}
\footnotesize
\begin{center}
   \begin{tabular}{|c|c|c|c|c|}
        \hline
        \multirow{2}{*}{Id}&
        \multirow{2}{*}{Problem}&
        \multirow{2}{*}{val$^\star$}&
        \multicolumn{2}{c|}{Size} \\ \cline{4-5}
& & &$n$ &$m$\\
\hline
7&MAD8& 0.5069 & 20 & 10  \\
\hline
8&Wong2& 24.3062 & 10 & 3  \\
\hline
9&Wong3& -37.9732 & 20 & 4 \\
\hline
\end{tabular}    
\end{center}
\end{table}
The numerical results are displayed in Table \ref{tab:minimax}.
\begin{table}
\caption{\small Numerical results for linearly inequality constrained minimax problems}
    \label{tab:minimax}
\footnotesize
\begin{center}
   \begin{tabular}{|c|c|c|c|c|c|c|}
        \hline
      \multirow{2}{*}{Id}& \multicolumn{2}{c|}{SG}&      \multicolumn{2}{c|}{SingleDSG } &      \multicolumn{2}{c|}{MultiDSG} \\ \cline{2-7}
 &val&  infeas&
val & infeas& val & infeas \\
\hline
7  &0.5065&0.0006 &0.4629 &  0.0325  & 0.5037 & 0.0038  \\
\hline
8  & 653.00  & 0.0000 &22.352 &0.6159  & 23.964 & 0.1017 \\
\hline
9 & 198.03  & 0.0000 &-39.638 & 0.9788 & -38.697 & 0.2851 \\
        \hline
        \hline
      \multirow{2}{*}{Id}& \multicolumn{2}{c|}{PDS with $s=1$ }&      \multicolumn{2}{c|}{PDS with $s=1.5$} &      \multicolumn{2}{c|}{PDS with $s=2$} \\ \cline{2-7}
 &val&  infeas&
val & infeas& val & infeas \\
\hline
7  & 0.5073 & 0.0000 & 0.5070 &0.0000& 0.5071&0.0000 \\
\hline
8  & 24.305& 0.0013 & 24.127 &0.0975 &  24.003 &0.1360\\
\hline
9 & -37.969& 0.0009  &-38.436 & 0.6214 & -38.628 & 0.8841\\
\hline
   \hline
      \multirow{2}{*}{Id}& \multicolumn{2}{c|}{SG}&      \multicolumn{2}{c|}{SingleDSG } &      \multicolumn{2}{c|}{MultiDSG} \\ \cline{2-7}
 & gap&time &
 gap& time& gap& time\\
\hline
7  &0.03\%  & 204 &2.92\% & 203  & 0.21\% & 221  \\
\hline
8  & 96.1\%  & 48 &7.72\% & 54  & 1.35\% & 57 \\
\hline
9 & 118\% & 76 &4.10\% & 86 & 1.82\% & 91 \\
        \hline
        \hline
      \multirow{2}{*}{Id}& \multicolumn{2}{c|}{PDS with $s=1$ }&      \multicolumn{2}{c|}{PDS with $s=1.5$} &      \multicolumn{2}{c|}{PDS with $s=2$} \\ \cline{2-7}
 &
 gap&time &
 gap& time& gap& time\\
\hline
7  & 0.01\% &113  & 0.01\% &116 &0.01\%& 113 \\
\hline
8  & 0.00\%& 29 & 0.71\% &21 &  1.20\% &27\\
\hline
9 & 0.01\%& 45  &1.18\% & 48 & 1.65\% & 43\\
\hline
\end{tabular}    
\end{center}
\end{table}
The convergences of SG, SingleDSG, MultiDSG, and PDS with $s\in\{1,1.5,2\}$ are plotted in Figures \ref{fig:lad.n10}, \ref{fig:lad.n100}, and \ref{fig:lad.n1000} for $\overline n\in\{10,100,1000\}$, respectively.
\begin{center}
    \begin{figure}
  {\includegraphics[width=.5\linewidth]{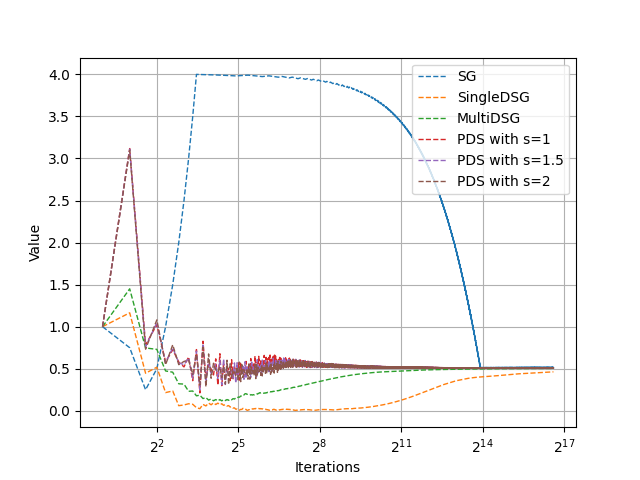}}\hfill
  {\includegraphics[width=.5\linewidth]{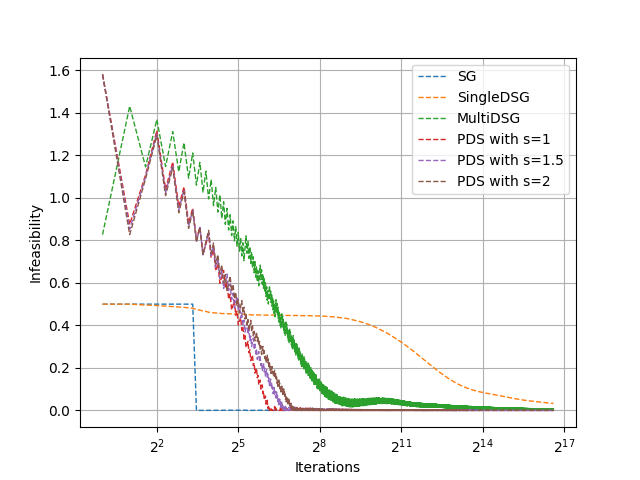}}\hfill
\caption{Illustration for MAD8 problem (Id 7).}
\label{fig:mad8}

  {\includegraphics[width=.5\linewidth]{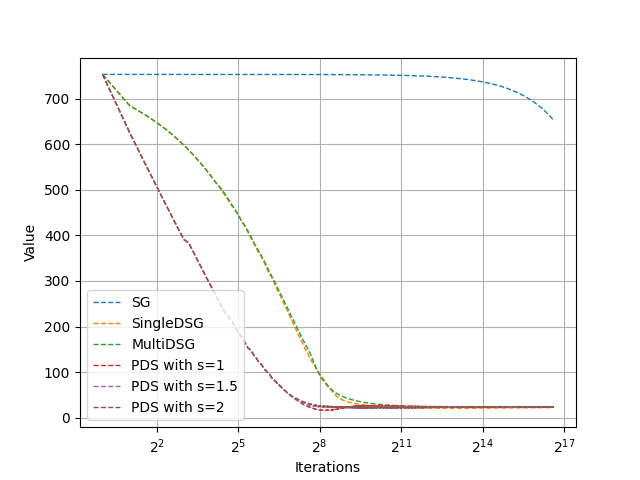}}\hfill
  {\includegraphics[width=.5\linewidth]{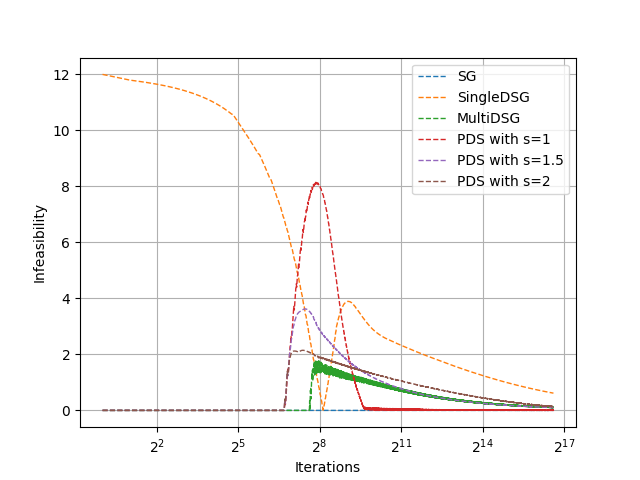}}\hfill
\caption{Illustration for Wong2 problem (Id 8).}
\label{fig:wong2}
  {\includegraphics[width=.5\linewidth]{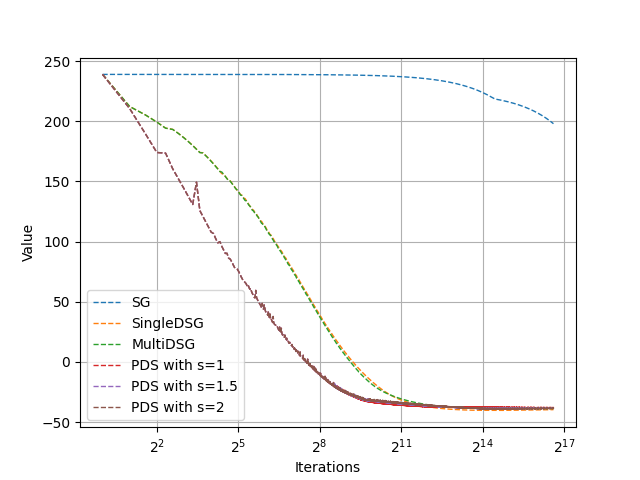}}\hfill
  {\includegraphics[width=.5\linewidth]{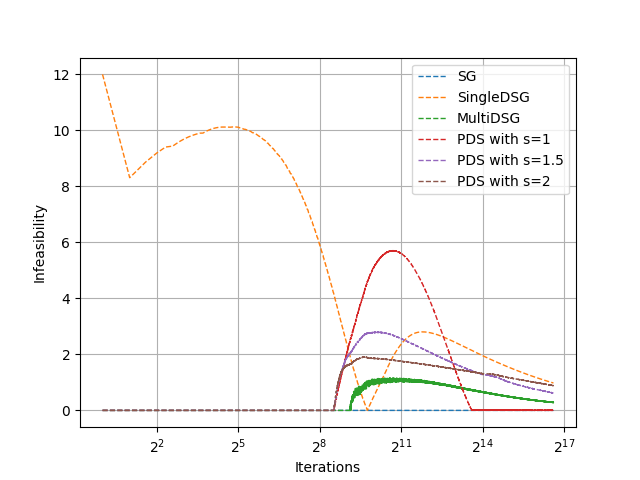}}\hfill
\caption{Illustration for Wong3 problem (Id 9).}
\label{fig:wong3}
\end{figure}
\end{center}

In Table \ref{tab:minimax}, the value returned by PDS with $s=1$ has the smallest gap w.r.t the exact value, while it takes less time compared to SG, SingleDSG, and MultiDSG. 
Moreover, the infeasibility of PDS with $s=1$ converges to zero more efficiently than SG, SingleDSG, and MultiDSG when the number of iterations increases.

Figures \ref{fig:lad.n10}, \ref{fig:lad.n100}, and \ref{fig:lad.n1000} show that the values returned by PDS with $s\in\{1,1.5,2\}$ converge at the same rate in this case, but the convergence of the infeasibility of PDS with $s=1$ to zero is the fastest.

\subsection{Least absolute deviations (LAD)}
Consider the following problem:
\begin{equation}\label{eq:lad}
        \min\limits_{x\in\R^{\overline n}} \|Dx-w\|_1\,.
\end{equation}
where $D \in \mathbb{R}^{\overline m\times \overline n}$ and $w\in \mathbb{R}^{\overline m}$ have entries taken in $[-1,1]$ with uniform distribution.

By adding slack variables $y=Dx-w$, \eqref{eq:lad} is equivalent to the CCOP:
\begin{equation}\label{eq:lad.equi}
\begin{array}{rl}
        \min\limits_{x,y} &\|y\|_1\\
        \text{s.t.}& y=Dx-w\,.
        \end{array}
\end{equation}
Let us solve \eqref{eq:lad.equi} by using SG, SingleDSG, MultiDSG and PDS.
The size of the test problems and the setting of our software are given in Table
\ref{tab:lad.prob}.
\begin{table}
\caption{\small Least absolute deviations}
    \label{tab:lad.prob}
    {\small\begin{itemize}
        \item Setting: $n=3\overline n$, $m=0$, $l=\overline m=2\overline n$, $K=10^5$, $\varepsilon=10^{-3}$, $\rho=1/s$, $\delta=0.99$.
    \end{itemize}}
\small
\begin{center}
   \begin{tabular}{|c|c|c|c|c|}
        \hline
        \multirow{2}{*}{Id}&
        \multicolumn{3}{c|}{size}\\ \cline{2-4}
 &  $\overline n$ & $n$ & $l$\\
\hline  
10 &10 & 30 & 20 \\ \hline
11 &100&300 &200\\ \hline
12 &1000&3000 & 2000\\ 
\hline
\end{tabular}    
\end{center}
\end{table}
The numerical results are  displayed in Table \ref{tab:lad}.
\begin{table}
\caption{\small Numerical results for least absolute deviations}
    \label{tab:lad}
\small
\begin{center}
   \begin{tabular}{|c|c|c|c|c|c|c|c|c|c|}
   \hline
        \multirow{2}{*}{Id}&
        \multicolumn{3}{c|}{SG} & \multicolumn{3}{c|}{SingleDSG }&      \multicolumn{3}{c|}{MultiDSG} \\ \cline{2-10}
 & val & infeas &time&
val&  infeas &time &
val & infeas & time\\
\hline  
10 & 0.0004 & 0.0010 & 23 & 0.0030 & 0.0017 & 39 &0.0017 & 0.0013& 21\\ \hline
 11 & 51.917& 0.0014 & 252 &0.1084&0.0641&507 &0.0945& 0.0013&108\\ \hline
12& 810.46 & 0.5941 &3682 & 87.849 & 8.4173 & 7578 & 11.535 & 0.0013 & 1858 \\ 
\hline
        \hline
        \multirow{2}{*}{Id}&
        \multicolumn{3}{c|}{PDS with $s=1$} & \multicolumn{3}{c|}{PDS with $s=1.5$ }&      \multicolumn{3}{c|}{PDS with $s=2$} \\ \cline{2-10}
 & val & infeas &time&
val&  infeas &time &
val & infeas & time\\
\hline  
10 & 0.0065 & 0.0033 & 14 & 0.0062 &0.0017 & 15 & 0.0074 & 0.0019 & 14\\ \hline
 11 & 0.0159& 0.0151 & 132 &0.0222&0.0021&116&0.0237&0.0020&120\\ \hline
12& 0.0434 & 43.281 &1787 & 0.0749 & 0.0044 & 1520 & 0.0739 & 0.0020 & 1443 \\ 
\hline
\end{tabular}    
\end{center}
\end{table}
The convergences of SG, SingleDSG, MultiDSG and PDS with $s\in\{1,1.5,2\}$ are illustrated in Figures \ref{fig:mad8}, \ref{fig:wong2} and \ref{fig:wong3}.
\begin{center}
    \begin{figure}
  {\includegraphics[width=.5\linewidth]{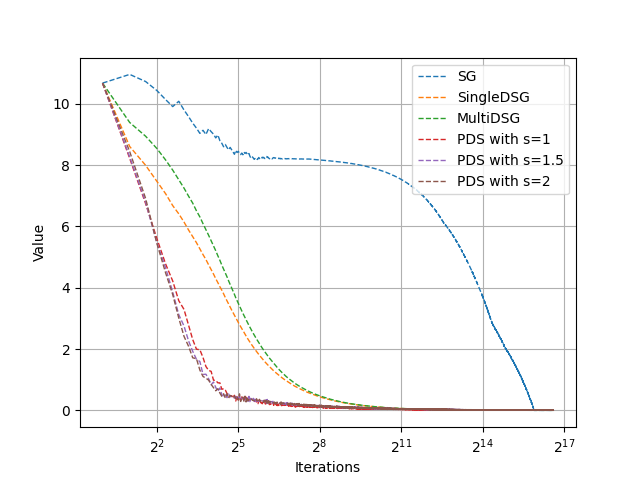}}\hfill
  {\includegraphics[width=.5\linewidth]{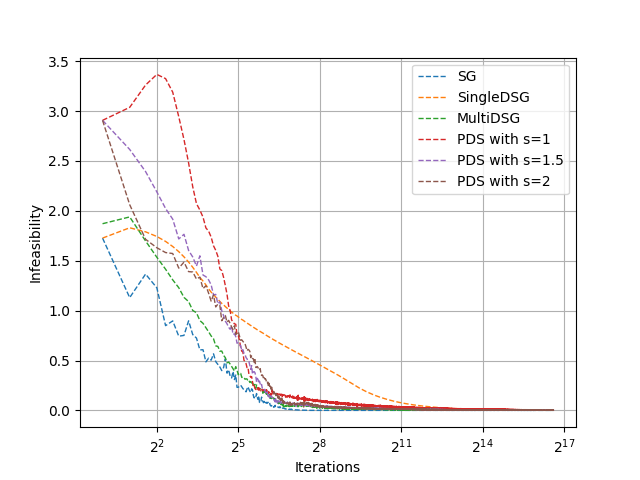}}\hfill
\caption{Illustration for LAD with $\overline n=10$ (Id 10).}
\label{fig:lad.n10}
  {\includegraphics[width=.5\linewidth]{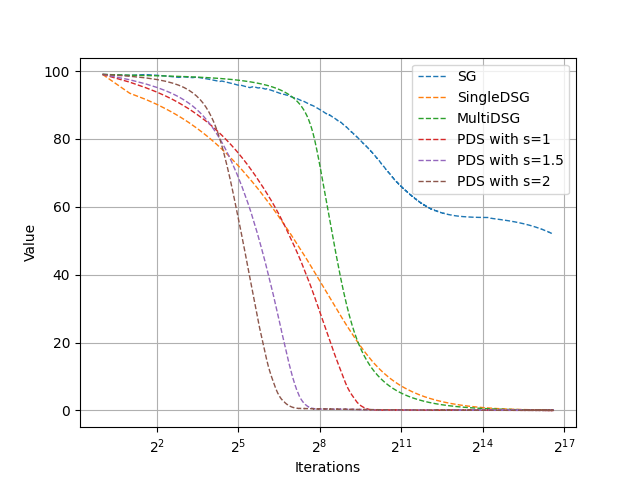}}\hfill
  {\includegraphics[width=.5\linewidth]{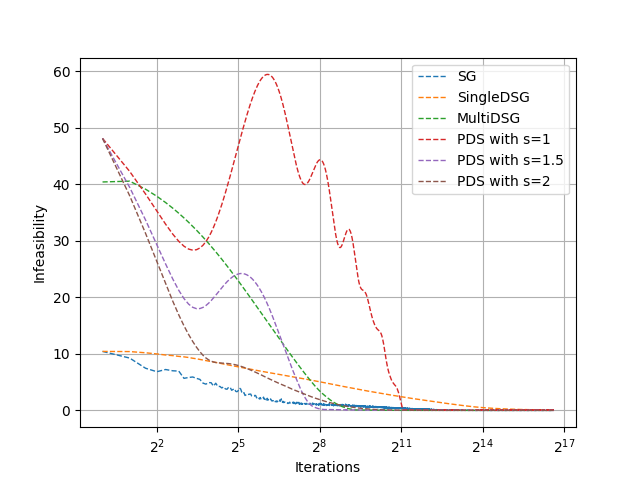}}\hfill
\caption{Illustration for LAD with $\overline n=100$ (Id 11).}
\label{fig:lad.n100}

  {\includegraphics[width=.5\linewidth]{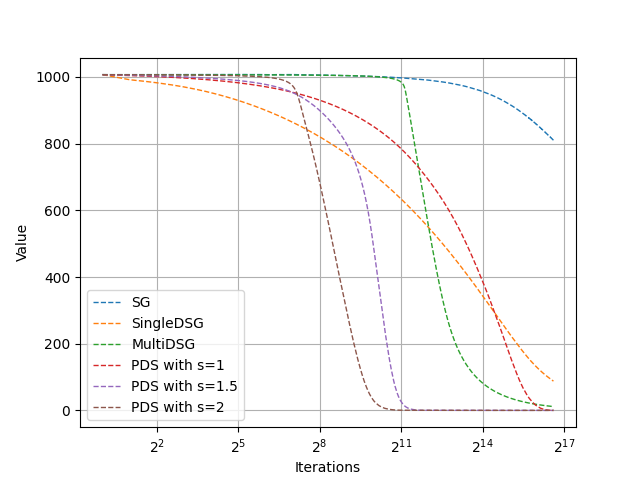}}\hfill
  {\includegraphics[width=.5\linewidth]{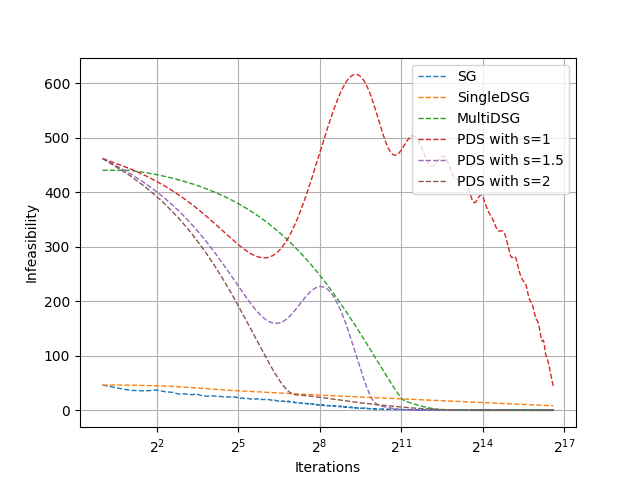}}\hfill
\caption{Illustration for LAD with $\overline n=1000$ (Id 12).}
\label{fig:lad.n1000}
\end{figure}
\end{center}
The value returned by PDS with $s=2$ has the the fastest convergence rate while requires smaller total time compared to SG, SingleDSG and MultiDSG in all cases. 
Moreover PDS with $s=2$ returns smallest infeasibility at the final iteration.

\subsection{Support vector machine (SVM)}
Consider the following problem:
\begin{equation}\label{eq:SVM}
   \min_{w,u} \left[{{\frac {1}{N}}\sum _{i=1}^{N}\max\{0,1-y_{i}(z_i ^\top w -u)\}+\frac{1}{2} \|w\|^{2}_2}\right]\,,
\end{equation}
where $z_i \in \R^{\overline n}$ and $y_i \in \{-1,1\}$ are taken as follows:
\begin{itemize}
    \item For $i=1,\dots,\lfloor N/2\rfloor$, we choose $y_i=1$ and take $z_i$ in $[0,1]^{\overline{n}}$ with uniform distribution.
    \item For $i=\lfloor N/2\rfloor+1,\dots,N$, we choose $y_i=-1$ and take $z_i$  in $[-1,0]^{\overline{n}}$ with uniform distribution.
\end{itemize}

Letting $\tau_i=z _{i}^\top w-u$, problem \eqref{eq:SVM} is equivalent to the problem:
\begin{equation}\label{eq:SVM.equi}
\begin{array}{rl}
   \min\limits_{w,u,\tau}& {{\frac {1}{N}}\sum _{i=1}^{N}\max\{0,1-y_{i}\tau_i\}+\frac{1}{2} \|w\|^{2}_2}\\
   \text{s.t.}& \tau = Zw-ue\,,
   \end{array}
\end{equation}
where $Z=\begin{bmatrix}
z_1^\top\\
\dots\\
z_N^\top\end{bmatrix}$ and $e=\begin{bmatrix}
1\\
\dots\\
1\end{bmatrix}$.

Let us solve \eqref{eq:SVM.equi}  by using SG, SingleDSG, MultiDSG and PDS.
The size of test problems and the setting of our software are given in Table \ref{tab:SVM.prob}.
\begin{table}
 \caption{\small Support vector machine}
    \label{tab:SVM.prob}
    {\small\begin{itemize}
        \item Setting: $N=200\times {\overline n}$, $m=0$, $\varepsilon=10^{-3}$, $\delta=0.5$, $\rho=1/s$, $\delta=0.99$, $K=10^4$.
    \end{itemize}}
\footnotesize
\begin{center}
   \begin{tabular}{|c|c|c|c|}
        \hline
        \multirow{2}{*}{Id}&
        \multicolumn{3}{c|}{Size}
         \\ \cline{2-4}
&$\overline n$ & $n$ & $l$\\
\hline  
13&2 & 403 & 400 \\ \hline
14&3&604 & 600\\ 
\hline
15&5&1006 & 1000\\ 
\hline
\end{tabular}    
\end{center}
\end{table}
The numerical results are displayed in Table \ref{tab:SVM}.
\begin{table}
 \caption{\small Numerical results for support vector machine}
    \label{tab:SVM}
\footnotesize
\begin{center}
   \begin{tabular}{|c|c|c|c|c|c|c|c|c|c|}
        \hline
        \multirow{2}{*}{Id}&
        \multicolumn{3}{c|}{SG} & \multicolumn{3}{c|}{SingleDSG}&      \multicolumn{3}{c|}{MultiDSG} \\ \cline{2-10}
& val & infeas &time&
val&  infeas &time &
val & infeas & time\\
\hline  
13 & 0.9764 & 0.0014 & 254 & 0.9823 & 0.2576 & 316 &0.9956 &0.0610&268\\ \hline
14& 0.9821 & 0.0012 & 516 & 0.9793 & 0.3068 & 610 & 0.9930 & 1.3704 & 515 \\ 
\hline
15& 0.9978 & 0.0014 & 778 & 0.9892 & 0.3616 & 991 & 0.9979 & 6.0966 & 846 \\ 
\hline
\hline
        \multirow{2}{*}{Id}&
        \multicolumn{3}{c|}{PDS with $s=1$} & \multicolumn{3}{c|}{PDS with $s=1.5$ }&      \multicolumn{3}{c|}{PDS with $s=2$} \\ \cline{2-10}
& val & infeas &time&
val&  infeas &time &
val & infeas & time\\
\hline  
13 & 0.9574 & 0.0998 & 140 & 0.9492 & 0.0957 &139 &0.9019& 0.0955& 144\\ \hline
14& 0.9678 & 3.2489 &229 & 0.9612 & 0.1171 & 255 & 0.9436 & 0.1170 & 246 \\ 
\hline
15& 0.9945 & 12.655 &430& 0.9823 &  0.1954 & 452 & 0.9831 & 0.1736 & 422 \\ 
\hline
\end{tabular}    
\end{center}
\end{table}
Figures \ref{fig:svm.n2}, \ref{fig:svm.n3} and \ref{fig:svm.n5} show the progress of SG, SingleDSG, MultiDSG and PDS with $s\in\{1,1.5,2\}$.

\begin{center}
    \begin{figure}
  {\includegraphics[width=.5\linewidth]{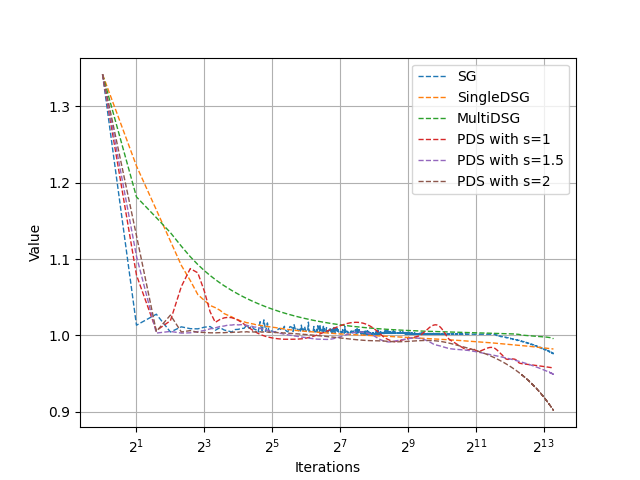}}\hfill
  {\includegraphics[width=.5\linewidth]{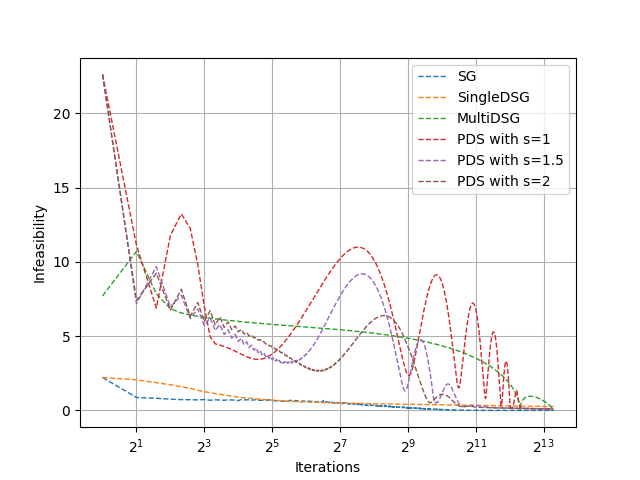}}\hfill
\caption{Illustration for SVM with $\overline n=2$ (Id 13).}
\label{fig:svm.n2}

  {\includegraphics[width=.5\linewidth]{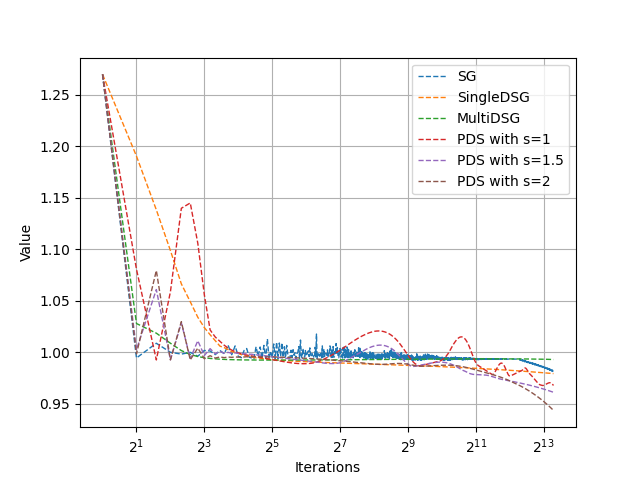}}\hfill
  {\includegraphics[width=.5\linewidth]{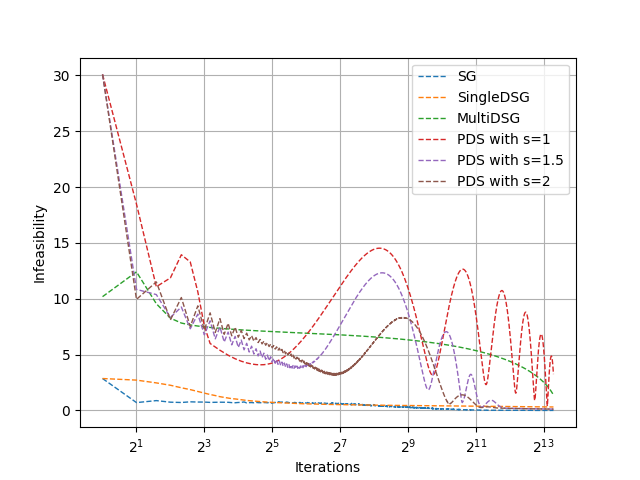}}\hfill
\caption{Illustration for SVM with $\overline n=3$ (Id 14).}
\label{fig:svm.n3}

  {\includegraphics[width=.5\linewidth]{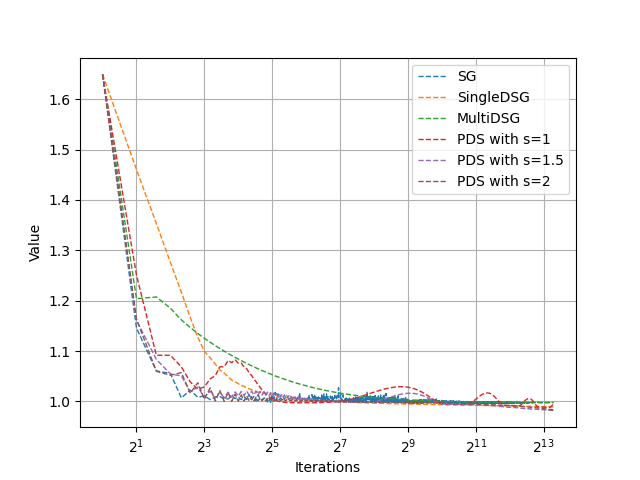}}\hfill
  {\includegraphics[width=.5\linewidth]{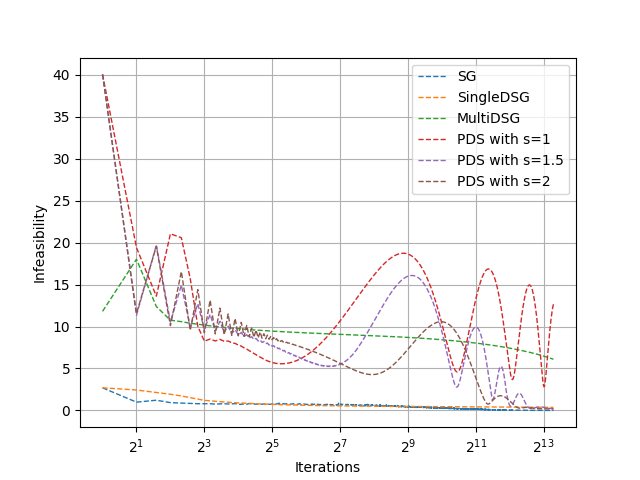}}\hfill
\caption{Illustration for SVM with $\overline n=5$ (Id 15).}
\label{fig:svm.n5}
\end{figure}
\end{center}
\section{Conclusion}
We have tested the performance of different subgradient methods in solving COP with functional constraints.
We  emphasize that PDS with $s\in[1,2]$ and MultiDSG are the best choices for users.
They provide better approximations in less computational time.
For COP \eqref{eq:CCOP} with the number of inequality constraints larger than the number of equality ones ($m> l$), users should probably choose PDS with $s$ close to $1$ (see MAD8, Wong2, and Wong3).
When the number of equality constraints of COP \eqref{eq:CCOP} is much greater than the number of inequality ones ($l\gg m$), PDS with $s$ close to $2$ would be a good choice (see Case 1 and LAD).
For COP \eqref{eq:CCOP} with a large number of equality constraints and a large number of inequality ones ($m\gg 1$ and $l\gg 1$), the best method would probably be MultiDSG (see Case 2).
SG and SingleDSG are much more inefficient than the others because they use the alternative single constraint $\overline{f}(x)\le 0$ with $\overline{f}$ defined as in \eqref{eq:alter.const}, and hence lose the information of the dual problem.

As a topic of further applications, we would like to use MultiDSG and PDS for solving large-scale semidefinite programs (SDPs) of form:
\begin{equation}\label{eq:SDP}
p^\star \,:= \,\inf\limits_{y \in \R^{n} }\left\{  f^\top y\ \left|\begin{array}{rl}
&C_i+\mathcal{B}_i y\preceq 0 \,,\,i=1,\dots, m\,,\\
& Ay=b
\end{array}
\right. \right\}\,,
\end{equation}
where $f\in \R^n$, $C_i\in \S^{s_i}$ $\mathcal{B}_i:\R^n\to \S^{s_i}$ is a linear operator defined by $\mathcal{B}_i y=\sum_{j=1}^n y_jB_{i}^{(j)}$ with $B_{i}^{(j)}\in \S^{s_i}$, $A\in\R^{l\times n}$, and $b\in\R^l$.
Here we denote by $\S^s$ the set of real symmetric matrices of size $s$.
Obviously, SDP \eqref{eq:SDP} is equivalent to COP with functional constraints:
\begin{equation}\label{eq:SDP.NS}
p^\star \,:= \,\inf\limits_{y \in \R^n }\left\{  f^\top y\ \left|\begin{array}{rl}
&\lambda_{\max}(C_i+\mathcal{B}_iy)\le 0 \,,\,i=1,\dots, m\,,\\
& Ay=b
\end{array}
\right. \right\}\,,
\end{equation}
where $\lambda_{\max}(A)$ stands for the largest eigenvalue of a given symmetric matrix $A$.

Finally, another application is to solve large-scale nonsmooth optimization problems arising from machine learning using MultiDSG and PDS.

\paragraph{\textbf{Acknowledgements}.} 
The first author was supported by the  funding from Torus-Actions.
The second author was supported by the MESRI funding from EDMITT.
%
\appendix

\section{Appendix}
\label{sec:Appendix}
\subsection{Dual subgradient method}
In each iteration $z^{(k+1)}=z^{(0)}-\frac{s^{(k+1)}}{\beta_{k}}$ is the maximizer of 
\begin{equation}\label{ufunc}
U^s_{\beta}(z):=- s^\top(z-z^{(0)})-\frac{\beta}{2}\|z-z^{(0)}\|^2_2
\end{equation}

for $s=s^{(k+1)}$ and $\beta=\beta_k$, with 
\begin{equation}\label{eq:optval}
U_{\beta_k}^{s^{(k+1)}}(z^{(k+1)})=\frac{\|s^{(k+1)}\|^2_2}{2\beta_k}.
\end{equation}

In addition, $U^s_{\beta}(z)$ is strongly concave in $z$ with parameter $\beta$,
\begin{equation}\label{strcave}
U_{\beta}^s(z)\le U_{\beta}^s(z')+ \nabla U_{\beta}^s(z')^\top(z-z') 
-\frac{\beta}{2}\|z-z'\|^2_2.
\end{equation}

We define $\overline{G}^{(k)}:=\frac{G(z^{(k)})}{\|G(z^{(k)})\|_2}$. 
Note that $\|\overline{G}^{(k)}\|_2=1$.

\begin{lemma}
	\label{bounded}
In method \eqref{alg:dsg}, it holds that  $\|z^{(k)}-z^\star\|_2\le\|z^{(0)}-z^\star\|_2+1$.
\end{lemma}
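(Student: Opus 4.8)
The plan is to run a dual-averaging potential argument, tracking the optimal values $\phi_k:=U^{s^{(k+1)}}_{\beta_k}(z^{(k+1)})=\frac{\|s^{(k+1)}\|_2^2}{2\beta_k}$ from \eqref{eq:optval} and converting a regret-type bound into the desired distance estimate for the saddle point $z^\star=(x^\star,\lambda^\star,\nu^\star)$ of \eqref{eq:saddle.point}. Throughout I use $s^{(k+1)}=\sum_{j=0}^{k}\overline{G}^{(j)}$ with $\|\overline{G}^{(j)}\|_2=1$ and the monotonicity $\beta_k\ge\beta_{k-1}$ of the step-size sequence.

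First I would prove the saddle-point monotonicity $\overline{G}^{(k)\top}(z^{(k)}-z^\star)\ge 0$, equivalently $G(z^{(k)})^\top(z^{(k)}-z^\star)\ge0$. Splitting this inner product into its $x$-, $\lambda$- and $\nu$-blocks, convexity of $L$ in $x$ lower-bounds the $x$-block by $L(x^{(k)},\lambda^{(k)},\nu^{(k)})-L(x^\star,\lambda^{(k)},\nu^{(k)})$, while affinity of $L$ in $(\lambda,\nu)$ turns the remaining two blocks into exact $L$-differences. Using affinity once more, the three terms collapse to $L(x^{(k)},\lambda^\star,\nu^\star)-L(x^\star,\lambda^{(k)},\nu^{(k)})$, which is nonnegative by the saddle-point inequality \eqref{eq:saddle.point}; here I crucially use $\lambda^{(k)}\ge 0$, which \eqref{alg:dsg} guarantees.

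Next I would establish the dual-averaging recursion. Writing $U^{s^{(k+1)}}_{\beta_k}(z)=U^{s^{(k)}}_{\beta_k}(z)-\overline{G}^{(k)\top}(z-z^{(0)})$, bounding $U^{s^{(k)}}_{\beta_k}\le U^{s^{(k)}}_{\beta_{k-1}}$ (since $\beta_k\ge\beta_{k-1}$), and applying the strong concavity \eqref{strcave} of $U^{s^{(k)}}_{\beta_{k-1}}$ around its maximizer $z^{(k)}$, I expect to obtain
\[
\phi_k\le \phi_{k-1}-\overline{G}^{(k)\top}(z^{(k)}-z^{(0)})+\frac{1}{2\beta_{k-1}},
\]
the last term coming from maximizing $-\frac{\beta_{k-1}}{2}\|u\|_2^2-\overline{G}^{(k)\top}u$ and $\|\overline{G}^{(k)}\|_2=1$. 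Introducing the potential $\psi_k:=\phi_k+(s^{(k+1)})^\top(z^\star-z^{(0)})$ and combining this recursion with the first step gives $\psi_k-\psi_{k-1}\le\frac{1}{2\beta_{k-1}}$, because the cross terms reassemble into $\overline{G}^{(k)\top}(z^\star-z^{(k)})\le0$.

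Finally, summing the potential increments and using the \emph{exact} telescoping identity $\sum_{j=0}^{k-1}\beta_j^{-1}=\beta_k-\beta_0=\beta_k-1$, which follows from $\beta_{j+1}-\beta_j=1/\beta_j$, together with $\psi_0\le\tfrac12$, yields $\psi_k\le\frac{\beta_k}{2}$. Strong concavity \eqref{strcave} at the maximizer $z^{(k+1)}$ gives $\frac{\beta_k}{2}\|z^\star-z^{(k+1)}\|_2^2\le\phi_k-U^{s^{(k+1)}}_{\beta_k}(z^\star)=\psi_k+\frac{\beta_k}{2}\|z^\star-z^{(0)}\|_2^2$; dividing by $\beta_k/2$ and inserting $\psi_k\le\beta_k/2$ produces $\|z^\star-z^{(k+1)}\|_2^2\le 1+\|z^\star-z^{(0)}\|_2^2\le(1+\|z^\star-z^{(0)}\|_2)^2$, and taking square roots finishes the proof (the case $k=0$ being trivial). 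I expect the main obstacle to be this last step: deriving the recursion cleanly and, above all, recognizing that $\sum_j\beta_j^{-1}$ telescopes exactly to $\beta_k-1$, which is precisely what cancels the factor $\beta_k$ and makes the bound uniform in $k$ rather than growing like $\sqrt{k}$.
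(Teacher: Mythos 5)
Your proof is correct and follows essentially the same dual-averaging argument as the paper's: the same per-step recursion for $U^{s^{(k+1)}}_{\beta_k}(z^{(k+1)})$ (the paper gets it by directly expanding $\|s^{(k)}+\overline{G}^{(k)}\|_2^2$, you via strong concavity \eqref{strcave}), the same saddle-point inequality $\overline{G}^{(k)\top}(z^{(k)}-z^\star)\ge 0$ as in \eqref{conineq}, the same exact telescoping $\sum_{j=0}^{k-1}\beta_j^{-1}=\beta_k-\beta_0$, and the same final strong-concavity step at the maximizer $z^{(k+1)}$. The only difference is bookkeeping: your potential $\psi_k$ absorbs the saddle-point term iteration by iteration instead of after summation, which as a minor bonus treats all iterates uniformly and avoids the paper's separate handling of the case $k=1$.
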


\begin{proof} 	

From \eqref{eq:optval}, 
\begin{equation}
\begin{array}{rl}
U_{\beta_k}^{s^{(k+1)}}(z^{(k+1)})&=\frac{\beta_{k-1}}{\beta_k}\frac{\|s^{(k+1)}\|^2_2}{2\beta_{k-1}}=\frac{\beta_{k-1}}{\beta_k}\frac{\|s^{(k)}+\overline{G}^{(k)}\|^2_2}{2\beta_{k-1}}\\[5pt]
&=\frac{\beta_{k-1}}{\beta_k}\left(\frac{\|s^{(k)}\|^2_2}{2\beta_{k-1}}+\frac{1}{\beta_{k-1}}
s^{(k)\top}\overline{G}^{(k)}+\frac{\|\overline{G}^{(k)}\|^2_2}{2\beta_{k-1}}\right)\\[10pt]
&=\frac{\beta_{k-1}}{\beta_k}\left(U_{\beta_{k-1}}^{s^{(k)}}(z^{(k)})+\frac{1}{\beta_{k-1}}
s^{(k)\top}\overline{G}^{(k)}+\frac{1}{2\beta_{k-1}}\right)\\[10pt]
&=\frac{\beta_{k-1}}{\beta_k}\left(U_{\beta_{k-1}}^{s^{(k)}}(z^{(k)})+( 
z^{(0)}-z^{(k)})^\top\overline{G}^{(k)}+\frac{1}{2\beta_{k-1}}\right).
\end{array}
\end{equation}
Rearranging,
\begin{equation}
\begin{array}{rl}
(z^{(k)}-z^{(0)})^\top\overline{G}^{(k)}
&=U_{\beta_{k-1}}^{s^{(k)}}(z^{(k)})-\frac{\beta_k}{\beta_{k-1}}U_{\beta_k}^{s^{(k+1)}}(z^{(k+1)})+\frac{1}{2\beta_{k-1}}\\[10pt]
&\le U_{\beta_{k-1}}^{s^{(k)}}(z^{(k)})-U_{\beta_k}^{s^{(k+1)}}(z^{(k+1)})+\frac{1}{2\beta_{k-1}},
\end{array}
\end{equation}
since $\beta_k$ is increasing. Telescoping these inequalities for 
$k=1,\dots,K$, and using the fact that $\|s_1\|_2=1$,
\begin{equation}
\begin{array}{rl}
\sum_{k=1}^K
(z^{(k)}-z^{(0)})^\top\overline{G}^{(k)}&\le 
U_{\beta_{0}}^{s_1}(z^{(1)})-U_{\beta_K}^{s^{(K+1)}}(z^{(K+1)})+\sum_{k=1}^K\frac{1}{2\beta_{k-1}}\label{kgo}\\[10pt]
&=\frac{1}{2\beta_0}-U_{\beta_K}^{s^{(K+1)}}(z^{(K+1)})+\sum_{k=0}^{K-1}\frac{1}{2\beta_k}\\[10pt]
&=-U_{\beta_K}^{s^{(K+1)}}(z^{(K+1)})+\frac{1}{2}\left(\sum_{k=0}^{K-1}\frac{1}{\beta_k}+\beta_0\right).
\end{array}
\end{equation}

Expanding the recursion $\beta_k=\frac{1}{\beta_k-1}+\beta_{k-1}$,
\begin{equation}
\label{constart}
\sum_{k=1}^K 
(z^{(k)}-z^{(0)})^\top\overline{G}^{(k)}\le
-U_{\beta_K}^{s^{(K+1)}}(z^{(K+1)})+\frac{\beta_K}{2}.
\end{equation}  

Given the convexity of $L(x,\lambda,\nu)$ in $x$ and linearity in $(\lambda,\nu)$,
\begin{equation}\label{4-1}
    L(x^\star,\lambda^{(k)},\nu^{(k)})\ge L(x^{(k)},\lambda^{(k)},\nu^{(k)})+  G_x(x^{(k)},\lambda^{(k)},\nu^{(k)})^\top(x^\star-x^{(k)})\,,
\end{equation}
and
\begin{equation}\label{4-2}
    \begin{array}{rl}
         L(x^{(k)},\lambda^\star,\nu^\star)=& L(x^{(k)},\lambda^{(k)},\nu^{(k)})+ 
G_\lambda(x^{(k)},\lambda^{(k)},\nu^{(k)})^\top(\lambda^\star-\lambda^{(k)})\rangle\\
&+ 
G_\nu(x^{(k)},\lambda^{(k)},\nu^{(k)})^\top(\nu^\star-\nu^{(k)}).
    \end{array}
\end{equation}

Subtracting \eqref{4-1} from \eqref{4-2} and using \eqref{eq:dual.CCOP},
\begin{equation}
\label{conineq}
0\le  (G_x(z^{(k)}),-G_\lambda(z^{(k)}),-G_\nu(z^{(k)}))^\top (z^{(k)}-z^\star).
\end{equation}  

It follows that 
\begin{equation}\label{strcon}
\begin{array}{rl}
0\le&\sum_{k=1}^K(z^{(k)}-z^\star)^\top\overline{G}^{(k)}\\[5pt]
=&\sum_{k=1}^K(z^{(0)}-z^\star)^\top\overline{G}^{(k)}+\sum_{k=1}^K(
z^{(k)}-z^{(0)})^\top\overline{G}^{(k)}\\[5pt]
\le&\sum_{k=1}^K 
(z^{(0)}-z^\star)^\top\overline{G}^{(k)}-U_{\beta_K}^{s^{(K+1)}}(z^{(K+1)})+\frac{\beta_K}{2}\\[5pt]
=& 
(z^{(0)}-z^\star)^\top s^{(k+1)}-U_{\beta_K}^{s^{(K+1)}}(z^{(K+1)})+\frac{\beta_K}{2}\,,
\end{array}
\end{equation}
where the second inequality uses \eqref{constart}, and the second equality follows since 
$s^{(k+1)}=s^{(k)}+\overline{G}^{(k)}$. Considering inequality \eqref{strcave} with $s=s^{(K+1)}$, 
$\beta=\beta_{K}$, $z=z^\star$, and $z'=z^{(K+1)}$,
\begin{equation}
\begin{array}{rl}
U_{\beta_K}^{s^{(K+1)}}(z^\star)\le& U_{\beta_K}^{s^{(K+1)}}(z^{(K+1)})+ \nabla 
U_{\beta_K}^{s^{(K+1)}}(z^{(K+1)})^\top(z^\star-z^{(K+1)})\\[5pt]
&-\frac{\beta_K}{2}\|z^\star-z^{(K+1)}\|^2_2\\[5pt]
=& U_{\beta_K}^{s^{(K+1)}}(z^{(K+1)})-\frac{\beta_K}{2}\|z^\star-z^{(K+1)}\|^2_2,
\end{array}
\end{equation}
given that $z^{(K+1)}$ is the maximum of $U_{\beta_K}^{s^{(K+1)}}(z)$.
Applying this inequality in \eqref{strcon},
\begin{equation}
    \begin{array}{rl}
         0\le& 
(z^{(0)}-z^\star)^\top s^{(k+1)}-U_{\beta_K}^{s^{(K+1)}}(z^\star)-\frac{\beta_K}{2}\|z^\star-z^{(K+1)}\|^2_2+\frac{\beta_K}{2}\\[5pt]
=&
(z^{(0)}-z^\star)^\top s^{(k+1)}+ 
s^{(k+1)\top} (z^\star-z^{(0)})+\frac{\beta_K}{2}\|z^\star-z^{(0)}\|^2_2\\[5pt]
&-\frac{\beta_K}{2}\|z^\star-z^{(K+1)}\|^2_2+\frac{\beta_K}{2}\\[5pt]
=&\frac{\beta_K}{2}\|z^\star-z^{(0)}\|^2_2-\frac{\beta_K}{2}\|z^\star-z^{(K+1)}\|^2_2+\frac{\beta_K}{2},
    \end{array}
\end{equation}
where the first equality uses the definition of $U_{\beta_K}^{s^{(K+1)}}(z^\star)$ \eqref{ufunc}. 
Rearranging,
\begin{equation}
\|z^\star-z^{(K+1)}\|^2_2\le\|z^\star-z^{(0)}\|^2_2+1.\label{normsq}
\end{equation}
As $K\ge 1$ from \eqref{kgo}, this implies that \eqref{normsq} holds for $K\ge 2$. Considering now 
when $k=1$,
\begin{equation}
\begin{array}{rl}
\|z^{(1)}-z^\star\|^2_2=&\|z^{(0)}-z^\star-\overline{G}^{(0)}\|^2_2=\|z^{(0)}-z^\star\|^2_2-2 (z^{(0)}-z^\star)^\top
\overline{G}^{(0)}+1\\
\le&\|z^{(0)}-z^\star\|^2_2+1\,,
\end{array}
\end{equation}
where the last line uses \eqref{conineq}. Now for all $k$, 
\begin{equation}
(\|z^\star-z^{(0)}\|_2+1)^2=\|z^\star-z^{(0)}\|^2_2+2\|z^\star-z^{(0)}\|+1\ge\|z^\star-z^{(k)}\|^2_2+2\|z^\star-z^{(0)}\|\,,\nonumber
\end{equation}
so that $\|z^\star-z^{(0)}\|_2+1\ge\|z^\star-z^{(k)}\|_2$.
\end{proof}

In order to prove the convergence result of Algorithm \ref{alg:dsg}, we require bounding the norm of the subgradients $G(z^{(k)})$. 

\begin{lemma}
\label{gradbound}
There exists a constant $C>0$ such that 
$\|G(z^{(k)})\|_2\le C$, for all $k\in\N$.
\end{lemma}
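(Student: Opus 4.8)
The goal is to show that the sequence of subgradients $G(z^{(k)})$ stays bounded in norm. The plan is to leverage Lemma~\ref{bounded}, which already tells us that the entire iterate sequence $(z^{(k)})$ stays within a bounded neighborhood of the saddle point, namely $\|z^{(k)}-z^\star\|_2\le\|z^{(0)}-z^\star\|_2+1$ for all $k$. Consequently all the iterates $z^{(k)}=(x^{(k)},\lambda^{(k)},\nu^{(k)})$ lie in a fixed compact set $\mathcal{K}\subset\R^n\times\R^m_+\times\R^l$, and in particular the primal components $x^{(k)}$ lie in a fixed compact subset of $\R^n$.

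Recall that $G(z)=(G_\lambda(z),-G_\lambda(z),-G_\nu(z))$ with $G_\lambda(z)=F(x)$ and $G_\nu(z)=Ax-b$. Hence $\|G(z^{(k)})\|_2^2=2\|F(x^{(k)})\|_2^2+\|Ax^{(k)}-b\|_2^2$, so bounding $G$ reduces to bounding $F(x^{(k)})$ and $Ax^{(k)}-b$ as $x^{(k)}$ ranges over a compact set. First I would invoke Assumption~\ref{ass:bound.subgrad}, under which the values of $f_1,\dots,f_m$ are bounded on any compact subset of $\R^n$; since $F_i(x)=\max\{f_i(x),0\}$, each $F_i$ is then also bounded on the compact set containing the $x^{(k)}$, giving a uniform bound on $\|F(x^{(k)})\|_2$. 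For the term $Ax^{(k)}-b$, the map $x\mapsto Ax-b$ is affine and hence continuous, so it too is bounded on the same compact set. Combining these two bounds yields a single constant $C>0$ with $\|G(z^{(k)})\|_2\le C$ for all $k\in\N$.

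I do not expect any serious obstacle here; the lemma is essentially a compactness-plus-continuity argument, and the only subtlety is making sure the right object is bounded. The one point requiring a little care is that $G$ involves the function values $F_i(x^{(k)})$ rather than subgradients of $f_0,\dots,f_m$, so it is the second half of Assumption~\ref{ass:bound.subgrad} (boundedness of the values of $f_1,\dots,f_m$ on compacts), not the subgradient-norm half, that does the work for the inequality constraints, while the equality part is handled purely by continuity of the affine map. Once the compactness of the iterate set is extracted from Lemma~\ref{bounded}, the conclusion follows immediately.
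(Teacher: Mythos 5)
Your proof bounds the wrong object. You take the displayed definition $G(z)=(G_{\lambda}(z),-G_{\lambda}(z),-G_{\nu}(z))$ at face value, but that display contains a typo: the first block must be $G_x(z)\in\partial_x L(z)$, not $G_\lambda(z)$. You can see this in three ways. Dimensionally, the first block of $G(z)$ drives the update of the primal variable $x\in\R^n$ in method \eqref{alg:dsg}, while $G_\lambda(z)=F(x)\in\R^m$, so the stated definition is inconsistent unless $m=n$; moreover, with it the method would never use any subgradient of $f_0$ and so could not "see" the objective at all. Second, inequality \eqref{conineq} in the proof of Lemma \ref{bounded} writes the relevant vector as $(G_x(z^{(k)}),-G_\lambda(z^{(k)}),-G_\nu(z^{(k)}))$, and the proof of Lemma \ref{convergence} likewise expands $\overline{G}^{(k)\top}(z-z^{(k)})$ with an $x$-part involving $G_x$. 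So the quantity to bound is
\begin{equation}
G(z^{(k)})=\Bigl(g_0(x^{(k)})+\textstyle\sum_{i=1}^m\lambda_i^{(k)}g_i(x^{(k)})+A^\top\nu^{(k)},\ -F(x^{(k)}),\ -(Ax^{(k)}-b)\Bigr)\,,
\end{equation}
and your reduction $\|G(z^{(k)})\|_2^2=2\|F(x^{(k)})\|_2^2+\|Ax^{(k)}-b\|_2^2$ drops exactly the block that is hard to control.

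The part of your argument that survives is the compactness step: Lemma \ref{bounded} does confine all iterates $z^{(k)}=(x^{(k)},\lambda^{(k)},\nu^{(k)})$ to a fixed ball around $z^\star$, and your treatment of the blocks $F(x^{(k)})$ and $Ax^{(k)}-b$ (values of $f_i$ bounded on compacts, continuity of the affine map) is fine as far as it goes. What is missing is the bound on the primal block: there you need $\|g_0(x^{(k)})\|_2$ and $\|g_i(x^{(k)})\|_2$ bounded, which is precisely the subgradient-norm half of Assumption \ref{ass:bound.subgrad} that you explicitly declare unnecessary, together with the boundedness of the multipliers $\lambda^{(k)}$ and $\nu^{(k)}$ (again supplied by Lemma \ref{bounded}), since they multiply $g_i(x^{(k)})$ and $A^\top$ linearly. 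The paper's proof is exactly the triangle-inequality estimate
\begin{equation}
\|G(z^{(k)})\|_2\le\|g_0(x^{(k)})\|_2+\sum_{i=1}^m\lambda_i^{(k)}\|g_i(x^{(k)})\|_2+\|A^\top\|\,\|\nu^{(k)}\|_2+\|F(x^{(k)})\|_2+\|Ax^{(k)}-b\|_2\,,
\end{equation}
with every factor bounded by combining the compact region of Lemma \ref{bounded} with both halves of Assumption \ref{ass:bound.subgrad}. As written, your argument proves boundedness of a different (and dimensionally inconsistent) vector, so it does not establish the lemma.
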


\begin{proof} 	
Recall that $g_0(x)\in \partial f_0(x)$ and $g_i(x)\in \partial F_i(x)$, 
\begin{equation}
\begin{array}{rl}
\|G(z^{(k)})\|_2=&\|(G_x(z^{(k)}),-G_{\lambda}(z^{(k)}),-G_{\nu}(z^{(k)}))\|_2\\[5pt]
=&\left\|\left(g(x^{(k)})+\sum_{i=1}^m\lambda_i^{(k)}g_i(x^{(k)}) +A^\top\nu^{(k)},-F(x^{(k)}),-Ax^{(k)}+b\right)\right\|_2\\[5pt]
\le&\|g(x^{(k)})\|_2+\sum_{i=1}^m\lambda_i^{(k)}\|g_i(x^{(k)})\|_2\\[5pt]
&+\|A^\top\|\|\nu^{(k)}\|_2+F(x^{(k)})+\|Ax^{(k)}-b\|_2.
\end{array}
\end{equation}
Here we note $\|A^\top\|:=\max_{u\in\R^l}\{\|A^\top u\|_2/\|u\|_2\}$.
The iterates of method \eqref{alg:dsg} are bounded in a convex compact region, 
$z^{(k)}\in 
D:=\{z : \|z-z^\star\|_2\le \|z^{(0)}-z^\star\|_2+1\}$. This implies that $x^{(k)}\in 
D_x:=\{x : \|x-x^\star\|_2\le \|z^{(0)}-z^\star\|_2+1\}$, $\lambda^{(k)}\in
D_{\lambda}:=\{\lambda : \|\lambda-\lambda^\star\|_2\le \|z^{(0)}-z^\star\|_2+1\}$ and $\nu^{(k)}\in 
D_{\nu}:=\{\nu : \|\nu-\nu^\star\|_2\le \|z^{(0)}-z^\star\|_2+1\}$. 
The desired result follows from Assumption \ref{ass:bound.subgrad}.
\end{proof}

\begin{lemma}
	\label{convergence}
	There exists a constant $C>0$ such that for any $K\in\N$ iterations in method  \eqref{alg:dsg},
	\begin{equation}
	f_0(\overline{x}^{(K+1)})-p^\star\le 
	\frac{C(\|z^{(0)}-z^\star\|^2_2+1)}{2(K+1)}\left(\frac{1}{1+\sqrt{3}}+\sqrt{2K+1}\right)\nonumber
	\end{equation}
	and
	\begin{equation}
\|F(\overline{x}^{(K+1)})\|_2+\|A\overline{x}^{(K+1)}-b\|_2\le 
	\frac{C(4(\|z^{(0)}-z^\star\|_2+1)^2+1)}{2(K+1)}\left(\frac{1}{1+\sqrt{3}}+\sqrt{2K+1}\right).\nonumber
	\end{equation} 
\end{lemma}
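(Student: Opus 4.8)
The plan is to reduce both estimates to a single \emph{master inequality} obtained by feeding the dual-averaging machinery already prepared for Lemma~\ref{bounded} into the convexity/linearity structure of $L$. Note first that, by construction of method~\eqref{alg:dsg}, the averaged iterate is a genuine convex combination
\[
\overline{x}^{(K+1)}=\frac{1}{\delta_{K+1}}\sum_{k=0}^{K}\frac{x^{(k)}}{\|G(z^{(k)})\|_2},\qquad \delta_{K+1}=\sum_{k=0}^{K}\frac{1}{\|G(z^{(k)})\|_2},
\]
so Jensen's inequality applies to $f_0$ and to each convex component $F_i$, while $A\overline{x}^{(K+1)}-b$ equals the corresponding weighted average by linearity.

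Next I would upgrade \eqref{constart} to an arbitrary comparison point $z=(x,\lambda,\nu)$. Splitting $\overline{G}^{(k)\top}(z^{(k)}-z)=\overline{G}^{(k)\top}(z^{(k)}-z^{(0)})+\overline{G}^{(k)\top}(z^{(0)}-z)$, recognising $\sum_{k=0}^{K}\overline{G}^{(k)}=s^{(K+1)}$, and using the identity $(z^{(0)}-z)^\top s^{(K+1)}=U^{s^{(K+1)}}_{\beta_K}(z)+\tfrac{\beta_K}{2}\|z-z^{(0)}\|_2^2$ read off from \eqref{ufunc}, the bound \eqref{constart} together with the strong concavity \eqref{strcave} (whose maximiser is $z^{(K+1)}$) collapses the two $U$-terms and yields
\[
\sum_{k=0}^{K}\overline{G}^{(k)\top}(z^{(k)}-z)\le \frac{\beta_K}{2}\bigl(1+\|z-z^{(0)}\|_2^2\bigr).
\]
I would then lower-bound the left-hand side termwise: convexity of $L(\cdot,\lambda^{(k)},\nu^{(k)})$ in $x$ (valid since $\lambda^{(k)}\ge 0$) and linearity in $(\lambda,\nu)$ give, exactly as in \eqref{4-1}--\eqref{4-2}, the inequality $(z^{(k)}-z)^\top G(z^{(k)})\ge L(x^{(k)},\lambda,\nu)-L(x,\lambda^{(k)},\nu^{(k)})$, so that
\[
\sum_{k=0}^{K}\frac{L(x^{(k)},\lambda,\nu)-L(x,\lambda^{(k)},\nu^{(k)})}{\|G(z^{(k)})\|_2}\le\frac{\beta_K}{2}\bigl(1+\|z-z^{(0)}\|_2^2\bigr).
\]

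With this master inequality in hand I would make two specialisations. For the objective estimate, take $x=x^\star$ (so feasibility of $x^\star$ forces $L(x^\star,\lambda^{(k)},\nu^{(k)})=p^\star$) and dual part $(\lambda^\star,\nu^\star)$; dividing by $\delta_{K+1}$ and applying Jensen to the convex map $L(\cdot,\lambda^\star,\nu^\star)$ gives $L(\overline{x}^{(K+1)},\lambda^\star,\nu^\star)-p^\star\le\frac{\beta_K}{2\delta_{K+1}}(1+\|z^{(0)}-z^\star\|_2^2)$, after which $\lambda^\star\ge 0,\ F\ge 0$ let me discard the nonnegative term $(\lambda^\star)^\top F(\overline{x}^{(K+1)})$, leaving $f_0(\overline{x}^{(K+1)})-p^\star$ up to the cross term $(\nu^\star)^\top(A\overline{x}^{(K+1)}-b)$. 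For the feasibility estimate I would again set $x=x^\star$, cancel $f_0(\overline{x}^{(K+1)})-p^\star$ against the saddle lower bound $L(\overline{x}^{(K+1)},\lambda^\star,\nu^\star)\ge p^\star$ from \eqref{saddle.point}, and then choose the dual comparison point $(\lambda,\nu)=(\lambda^\star,\nu^\star)+R\,(\Phi,A\overline{x}^{(K+1)}-b)/\|(\Phi,A\overline{x}^{(K+1)}-b)\|_2$, where $\Phi:=\delta_{K+1}^{-1}\sum_{k=0}^{K}F(x^{(k)})/\|G(z^{(k)})\|_2$ and $R=\|z^{(0)}-z^\star\|_2+1$. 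This keeps $\lambda\ge 0$, makes the left-hand side equal to $R\sqrt{\|\Phi\|_2^2+\|A\overline{x}^{(K+1)}-b\|_2^2}$ (whence $\|F(\overline{x}^{(K+1)})\|_2+\|A\overline{x}^{(K+1)}-b\|_2$ is controlled via $F(\overline{x}^{(K+1)})\le\Phi$ componentwise), and bounds $\|(x^\star,\lambda,\nu)-z^{(0)}\|_2\le\|z^\star-z^{(0)}\|_2+R\le 2(\|z^{(0)}-z^\star\|_2+1)$, producing exactly the factor $4(\|z^{(0)}-z^\star\|_2+1)^2+1$.

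Finally I would turn $\frac{\beta_K}{2\delta_{K+1}}$ into the stated prefactor using two elementary facts: Lemma~\ref{gradbound} gives $\|G(z^{(k)})\|_2\le C$, hence $\delta_{K+1}\ge (K+1)/C$; and an induction on $\beta_{k+1}=\beta_k+1/\beta_k$ (which yields $\beta_K^2=2K+1+\sum_{k<K}\beta_k^{-2}$, so $\sqrt{2K+1}\le\beta_K\le\frac{1}{1+\sqrt3}+\sqrt{2K+1}$) controls the numerator. I expect the real difficulty to sit in the feasibility estimate, namely designing the dual comparison point so that it simultaneously aligns with the violation direction, stays dual-feasible ($\lambda\ge 0$), and keeps its distance to $z^{(0)}$ within $2(\|z^{(0)}-z^\star\|_2+1)$; the objective estimate is then comparatively routine, its only wrinkle being the equality-constraint cross term $(\nu^\star)^\top(A\overline{x}^{(K+1)}-b)$, which is of the same $\mathcal{O}(K^{-1/2})$ order as the feasibility bound and, together with the harmless universal constants (such as the $\sqrt2$ from passing between $\sqrt{\|\Phi\|_2^2+\cdots}$ and the sum of norms and the factor $\|\nu^\star\|_2$), is absorbed into the single constant $C$.
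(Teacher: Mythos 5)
Your proposal is correct and runs on the same dual-averaging skeleton as the paper's proof: your ``master inequality'' is precisely the paper's \eqref{prfin} (equivalently \eqref{maxx} after the termwise convexity/linearity bound and Jensen), and your treatment of the prefactor $\frac{\beta_K}{2\delta_{K+1}}$ via Lemma \ref{gradbound} and the bound $\beta_K\le \frac{1}{1+\sqrt 3}+\sqrt{2K+1}$ matches the paper's final step. Where you genuinely diverge is in the two specializations. For feasibility, the paper takes $x=\overline{x}^{(K+1)}$ and perturbs the \emph{averaged dual iterates} $(\overline{\lambda}^{(K+1)},\overline{\nu}^{(K+1)})$ by a half-unit step along the violation direction, which forces it to control $\|\overline{z}^{(K+1)}-z^{(0)}\|_2$ through Lemma \ref{bounded} (this is exactly the chain \eqref{c11}--\eqref{c12}); you instead take $x=x^\star$, center the dual perturbation at $(\lambda^\star,\nu^\star)$ with radius $R=\|z^{(0)}-z^\star\|_2+1$, and cancel the objective difference with the saddle-point inequality \eqref{eq:saddle.point}. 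Your comparison point then has its distance to $z^{(0)}$ bounded a priori, so the feasibility estimate no longer needs the iterate bound of Lemma \ref{bounded} at all -- a modest structural simplification (at the price of a harmless $\sqrt 2$ and the componentwise bound $F(\overline{x}^{(K+1)})\le\Phi$). For the objective, both you and the paper take $z=z^\star$, but the paper then writes an \emph{equality} that silently discards $\lambda^{\star\top}F(\overline{x}^{(K+1)})+\nu^{\star\top}(A\overline{x}^{(K+1)}-b)$; the first term has a sign, but the second does not, so the paper's step as written has a gap. Your explicit absorption of the cross term $\nu^{\star\top}(A\overline{x}^{(K+1)}-b)$ via the feasibility estimate repairs this, at the cost of letting $C$ depend also on $\|\nu^\star\|_2$ -- which is fine, since the lemma only asserts existence of some constant $C$.

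One small caveat: the two-sided bound $\sqrt{2K+1}\le\beta_K\le\frac{1}{1+\sqrt 3}+\sqrt{2K+1}$ does not drop out of the identity $\beta_K^2=2K+1+\sum_{k<K}\beta_k^{-2}$ as immediately as your parenthetical suggests (the naive induction on the upper bound overshoots by $\beta_k^{-2}$ per step); this is exactly Nesterov's Lemma 3 in \cite{nesterov2009primal}, which the paper cites, and the careful induction there should simply be invoked or reproduced.
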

\begin{proof} 	
Using equation \eqref{constart}, and recalling that $z^{(K+1)}$ maximizes $U_{\beta_K}^{s^{(K+1)}}(z)$ defined by
\eqref{ufunc}, 
\begin{equation}\label{prfin}
\begin{array}{rl}
\frac{\beta_K}{2}\ge&\sum_{k=1}^K
(z^{(k)}-z^{(0)})^\top\overline{G}^{(k)}+U_{\beta_K}^{s^{(K+1)}}(z^{(K+1)})\\[5pt]
=&\sum_{k=1}^K(
z^{(k)}-z^{(0)})^\top\overline{G}^{(k)}\\[5pt]
&+\max\limits_{z\in\R^{n+m+l}}\left\{-\left(
\sum_{k=0}^K\overline{G}^{(k)}\right)^\top(z-z^{(0)})-\frac{\beta_K}{2}\|z-z^{(0)}\|^2_2\right\}\\[5pt]
=&\max\limits_{z\in\R^{n+m+l}}\left\{
\sum_{k=0}^K-
\overline{G}^{(k)\top}(z-z^{(k)})-\frac{\beta_K}{2}\|z-z^{(0)}\|^2_2\right\}.
\end{array} 
\end{equation}
Like $\overline{x}^{(K+1)}$, let 
$\overline{z}^{(K+1)}:=\delta_{K+1}^{-1}\sum_{k=0}^K\frac{z^{(k)}}{\|G(z^{(k)})\|_2}$, $\overline{\lambda}^{(K+1)}:=\delta_{K+1}^{-1}\sum_{k=0}^K\frac{\lambda^{(k)}}{\|G(z^{(k)})\|_2}$ and $\overline{\nu}^{(K+1)}:=\delta_{K+1}^{-1}\sum_{k=0}^K\frac{\nu^{(k)}}{\|G(z^{(k)})\|_2}$.
Multiplying both sides of \eqref{prfin} by $\delta_{K+1}^{-1}$,
\begin{equation}\label{maxx}
\begin{array}{rl}
&\delta_{K+1}^{-1}\frac{\beta_K}{2}\\[10pt]
\ge&\delta_{K+1}^{-1}\max\limits_{z\in\R^{n+m+l}}\left\{
\sum_{k=0}^K-
\overline{G}^{(k)\top}(z-z^{(k)})-\frac{\beta_K}{2}\|z-z^{(0)}\|^2_2\right\}\\[10pt]
=&\delta_{K+1}^{-1}\max\limits_{z\in\R^{n+m+l}}\left\{
\sum_{k=0}^K- 
\frac{G_x(z^{(k)})^\top}{\|G(z^{(k)})\|_2}(x-x^{(k)})+ 
\frac{G_{\lambda}(z^{(k)})^\top}{\|G(z^{(k)})\|_2}(\lambda-\lambda^{(k)})\right.\\[10pt]
&+ 
\left.\frac{G_{\nu}(z^{(k)})^\top}{\|G(z^{(k)})\|_2}(\nu-\nu^{(k)})-\frac{\beta_K}{2}\|z-z^{(0)}\|^2_2\right\}\\[5pt]
\ge&\delta_{K+1}^{-1}\max\limits_{z\in\R^{n+m+l}}\left\{ 
\sum_{k=0}^K 
\frac{L(x^{(k)},\lambda^{(k)},\nu^{(k)})-L(x,\lambda^{(k)},\nu^{(k)})}{\|G(z^{(k)})\|_2}\right.\\[10pt]
&\left.+\frac{L(x^{(k)},\lambda,\nu)-L(x^{(k)},\lambda^{(k)},\nu^{(k)})}{\|G(z^{(k)})\|_2}-\frac{\beta_K}{2}\|z-z^{(0)}\|^2_2\right\}\\[10pt]
=&\delta_{K+1}^{-1}\max\limits_{z\in\R^{n+m+l}}\left\{
\sum_{k=0}^K 
\frac{L(x^{(k)},\lambda,\nu)-L(x,\lambda^{(k)},\nu^{(k)})}{\|G(z^{(k)})}_2\|-\frac{\beta_K}{2}\|z-z^{(0)}\|^2_2\right\}\\[10pt]
=&\max\limits_{z\in\R^{n+m+l}}\left\{ 
\delta_{K+1}^{-1}\sum_{k=0}^K 
\frac{L(x^{(k)},\lambda,\nu)-L(x,\lambda^{(k)},\nu^{(k)})}{\|G(z^{(k)})\|_2}-\delta_{K+1}^{-1}\frac{\beta_K}{2}\|z-z^{(0)}\|^2_2\right\}\\[10pt]
\ge&\max\limits_{z\in\R^{n+m+l}}\left\{ 
L(\overline{x}^{(K+1)},\lambda,\nu)-L(x,\overline{\lambda}^{(K+1)},\overline{\nu}^{(K+1)})-\delta_{K+1}^{-1}\frac{\beta_K}{2}\|z-z^{(0)}\|^2_2\right\}\\[5pt]
=&\max\limits_{z\in\R^{n+m+l}}\left\{ f_0(\overline{x}^{(K+1)})+F(\overline{x}^{(K+1)})^\top \lambda+(A\overline{x}^{(K+1)}-b)^\top \nu
\right.\\[10pt]
&\left.-f_0(x)-F(x)^\top \overline{\lambda}^{(K+1)}-(Ax-b)^\top\overline{\nu}^{(K+1)}-\delta_{K+1}^{-1}\frac{\beta_K}{2}\|z-z^{(0)}\|^2_2\right\},
\end{array} 
\end{equation}
where the third inequality uses Jensen's inequality. 
Given the maximum function, the inequality 
holds for any choice of $z$. 
We consider two cases, the first being $x=\overline{x}^{(K+1)}$, $\lambda=\overline{\lambda}^{(K+1)}+\frac{F(\overline{x}^{(K+1)})}{2\|F(\overline{x}^{(K+1)})\|_2}$ and $\nu=\overline{\nu}^{(K+1)}+\frac{A\overline{x}^{(K+1)}-b}{2\|A\overline{x}^{(K+1)}-b\|_2}$. 
From \eqref{maxx},
\begin{equation}\label{c11}
\begin{array}{rl}
&\delta_{K+1}^{-1}\frac{\beta_K}{2}\\[5pt]
\ge&f_0(\overline{x}^{(K+1)})+\left(\overline{\lambda}^{(K+1)}+\frac{F(\overline{x}^{(K+1)})}{2\|F(\overline{x}^{(K+1)})\|_2}\right)^\top F(\overline{x}^{(K+1)})\\[5pt]
&+\left(\overline{\nu}^{(K+1)}+\frac{A\overline{x}^{(K+1)}-b}{2\|A\overline{x}^{(K+1)}-b\|_2}\right)^\top (A\overline{x}^{(K+1)}-b)\\[5pt]
&-f_0(\overline{x}^{(K+1)})-F(\overline{x}^{(K+1)})^\top\overline{\lambda}^{(K+1)}-\overline{\nu}^{(K+1)\top} (A\overline{x}^{(K+1)}-b)\\[5pt]
&-\delta_{K+1}^{-1}\frac{\beta_K}{2}\left\|\left(\overline{x}^{(K+1)},\overline{\lambda}^{(K+1)}+\frac{F(\overline{x}^{(K+1)})}{2\|F(\overline{x}^{(K+1)})\|_2},\overline{\nu}^{(K+1)}+\frac{A\overline{x}^{(K+1)}-b}{2\|A\overline{x}^{(K+1)}-b\|_2}\right)-z^{(0)}\right\|^2_2\\[5pt]
=&\frac{1}{2}(\|F(\overline{x}^{(K+1)})\|_2+\|A\overline{x}^{(K+1)}-b\|_2)\\[5pt]
&-\delta_{K+1}^{-1}\frac{\beta_K}{2}\left\|\left(\overline{x}^{(K+1)},\overline{\lambda}^{(K+1)}
+\frac{F(\overline{x}^{(K+1)})}{2\|F(\overline{x}^{(K+1)})\|_2},\overline{\nu}^{(K+1)}+\frac{A\overline{x}^{(K+1)}-b}{2\|A\overline{x}^{(K+1)}-b\|_2}\right)-z^{(0)}\right\|^2_2.
\end{array}
\end{equation} 

Further,
\begin{equation}\label{c12}
\begin{array}{rl}
&\left\|\left(\overline{x}^{(K+1)},\overline{\lambda}^{(K+1)}+\frac{F(\overline{x}^{(K+1)})}{2\|F(\overline{x}^{(K+1)})\|_2},\overline{\nu}^{(K+1)}+\frac{A\overline{x}^{(K+1)}-b}{2\|A\overline{x}^{(K+1)}-b\|_2}\right)-z^{(0)}\right\|^2_2\\[5pt]
\le&\|\overline{z}^{(K+1)}-z^{(0)}\|_2+\frac{\|F(\overline{x}^{(K+1)})\|_2}{2\|F(\overline{x}^{(K+1)})\|_2} + \frac{\|A\overline{x}^{(K+1)}-b\|_2}{2\|A\overline{x}^{(K+1)}-b\|_2}\\[5pt]
=&\|\overline{z}^{(K+1)}-z^\star+z^\star-z^{(0)}\|_2+1\\[5pt]
\le&\|\overline{z}^{(K+1)}-z^\star\|_2+\|z^\star-z^{(0)}\|_2+1\\[5pt]
\le&\delta_{K+1}^{-1}\sum_{k=0}^K\frac{\|z^{(k)}-z^\star\|_2}{\|G(z^{(k)})\|_2}+\|z^\star-z^{(0)}\|_2+1\\[5pt]
\le&\delta_{K+1}^{-1}\sum_{k=0}^K\frac{\|z^{(0)}-z^\star\|_2+1}{\|G(z^{(k)})\|_2}+\|z^\star-z^{(0)}\|_2+1\\[5pt]
=&2(\|z^{(0)}-z^\star\|_2+1),
\end{array} 
\end{equation}
where the third inequality uses Jensen's inequality and the fourth inequality uses Lemma 
\ref{bounded}. Combining \eqref{c11} 
and \eqref{c12}, 
\begin{alignat}{6}
\|F(\overline{x}^{(K+1)})\|_2+\|A\overline{x}^{(K+1)}-b\|_2\le 
\delta_{K+1}^{-1}\frac{\beta_K}{2}(4(\|z^{(0)}-z^\star\|_2+1)^2+1).\nonumber
\end{alignat} 

The second case will use $w=z^\star$. Starting from \eqref{maxx},
\begin{equation}
\begin{array}{rl}
\delta_{K+1}^{-1}\frac{\beta_K}{2}\ge&f_0(\overline{x}^{(K+1)})+F(\overline{x}^{(K+1)})^\top \lambda^\star+(A\overline{x}^{(K+1)}-b)^\top \nu^\star\\[5pt]
&-f_0(x^\star)-F(x^\star)^\top \overline{\lambda}^{(K+1)}-(Ax^\star-b)^\top \overline{\nu}^{(K+1)}-\delta_{K+1}^{-1}\frac{\beta_K}{2}\|z^\star-z^{(0)}\|^2_2\\[5pt]
=&f_0(\overline{x}^{(K+1)})-f_0(x^\star)-\delta_{K+1}^{-1}\frac{\beta_K}{2}\|z^\star-z^{(0)}\|^2_2,
\end{array}
\end{equation} 
since $F(x^\star)=0$ and $Ax^\star=b$. Rearranging, 
\begin{equation}
f_0(\overline{x}^{(K+1)})-f_0(x^\star)\le \delta_{K+1}^{-1}\frac{\beta_K}{2}(\|z^{(0)}-z^\star\|^2_2+1).
\end{equation}

Using Lemma \ref{gradbound} and \cite[Lemma 3]{nesterov2009primal}, $\delta_{K+1}^{-1}\frac{\beta_K}{2}$ can be 
bounded as  follows. 
\begin{equation}
\begin{array}{rl}
\delta_{K+1}^{-1}\frac{\beta_K}{2}\le&\frac{1}{2}\left(\sum_{k=0}^K\frac{1}{\|G(z^{(k)})\|_2}\right)^{-1}\left(\frac{1}{1+\sqrt{3}}+\sqrt{2k+1}\right)\\[5pt]
\le&\frac{1}{2}(\sum_{k=0}^K\frac{1}{C})^{-1}\left(\frac{1}{1+\sqrt{3}}+\sqrt{2k+1}\right)\le\frac{C}{2(K+1)}\left(\frac{1}{1+\sqrt{3}}+\sqrt{2k+1}\right).
\end{array}
\end{equation}
\end{proof}

\subsection{Primal-dual subgradient method}
\label{app:subgrad.comp}
\paragraph{Subgradient computing.}
Given $C$ as a subset of $\R^n$, recall that $\conv(C)$ stands for the convex hull  generated by $C$. 

The subdifferential of $F_i$ is computed by the formula:
\begin{equation}
    \partial F_i(x)=\begin{cases}\partial f_i(x)&\text{ if }f_i(x)>0\,,\\
    \conv(\partial f_i(x)\cup \{0\})&\text{ if }f_i(x)=0\,,\\
    \{0\}&\text{ otherwise}.
    \end{cases}
\end{equation}
Since the subdifferential of $\|\cdot\|^s_2$ function is computed by the formula
\begin{equation}
    \partial \|\cdot\|_2^s(z)=\begin{cases}
    \{2{z}\}&\text{ if }s= 2\,,\\
    s\|z\|_2^{s-2}{z}&\text{ if }z\ne 0\text{ and }s\in[0,2)\,,\\
    \{sg\in\R^m: \|g\|_2\le 1\}&\text{ if }z= 0\text{ and }s\in[0,2),
    \end{cases}
\end{equation}
we have
\begin{equation}\label{eq:differe.composition}
\begin{array}{rl}
    \partial \|F(\cdot)\|_2^s(x)&\supset \conv\{\sum_{i=1}^ma_i b_i\,:\,a\in \partial \|\cdot\|_2^s(F(x))\,,\,b_i\in \partial F_i(x)\}\\[5pt]
    &\supset \begin{cases} {s\|F(x)\|_2^{s-2}\sum_{i=1}^mF_i(x) \partial F_i(x) }&\text{ if }F(x)\ne 0\,,\\
    \{0\}&\text{ otherwise}\,,
    \end{cases}
    \end{array}
\end{equation}
and
\begin{equation}\label{eq:differe.composition.affine}
\begin{array}{rl}
    \partial \|A\cdot-b\|_2^s(x)&\supset \conv\{A^\top u\,:\,u\in \partial \|\cdot\|_2^s(Ax-b)\}\\
    &\supset \begin{cases} {s\|Ax-b\|_2^{s-2}A^\top(Ax-b)}&\text{ if }Ax-b\ne 0\,,\\
    \{0\}&\text{ otherwise}\,.
    \end{cases}
    \end{array}
\end{equation}
 
\paragraph{Proof of Theorem \ref{theo:convergence}.}
In \cite[Section 8]{boyd2014subgradient}, author uses a wrong statement that if $(a_k)_{k=1}^\infty$ and $(b_k)_{k=1}^\infty$ are two nonnegative real sequence such that $\sum_{k=1}^\infty a_kb_k$ is bounded and $\sum_{k=1}^\infty a_k$ diverges, then $b_k\to 0$ as $k\to \infty$. 
For counterexample, take $a_k=1/k$ and
\begin{equation}
    b_k=\begin{cases} 
1&\text{if }k=i^2 \text{ for some }i\in\N\,,\\
0&\text{otherwise.}
\end{cases}
\end{equation}
It is not hard to prove Theorem \ref{theo:convergence} relying on the following lemma:
\begin{lemma}\label{lem:theo:convergence}
Let $K=\infty$ in method \eqref{alg:PDS}.
For every $k\in\N$, let $i_k$ be any element belonging to 
\begin{equation}
    \argmin_{1\le i\le k} T^{(i)\top} (z^{(i)}-z^\star)\,.
\end{equation}
Then 
\begin{equation}\label{eq:conver.subseq}
         |f_0(x^{(i_k)})-p^\star|\to 0\quad\text{ and }\quad
          \|F(x^{(i_k)})\|_2+
          \|Ax^{(i_k)}-b\|_2\to 0
\end{equation}
as $k\to \infty$ with the rate at least $\mathcal O({(k+1)^{\delta/(2s)}})$.
\end{lemma}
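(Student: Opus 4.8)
The plan is to track the squared distance $\|z^{(k)}-z^\star\|_2^2$ to the saddle point $z^\star=(x^\star,\lambda^\star,\nu^\star)$ and to exploit the monotonicity of the operator $T_\rho$. Expanding the iteration \eqref{eq:sheme} with the step size $\alpha_k=\gamma_k/\|T^{(k)}\|_2$ gives
\begin{equation}
\|z^{(k+1)}-z^\star\|_2^2=\|z^{(k)}-z^\star\|_2^2-2\gamma_k\frac{T^{(k)\top}(z^{(k)}-z^\star)}{\|T^{(k)}\|_2}+\gamma_k^2.
\end{equation}
First I would verify that $0\in T_\rho(z^\star)$: since $F(x^\star)=0$ and $Ax^\star=b$, the penalty subgradients in \eqref{eq:differe.composition}--\eqref{eq:differe.composition.affine} vanish at $x^\star$, so that $\partial_x L_\rho(z^\star)=\partial_x L(z^\star)\ni 0$ by the saddle-point condition \eqref{eq:saddle.point}, while $-\partial_\lambda L_\rho(z^\star)=-F(x^\star)=0$ and $-\partial_\nu L_\rho(z^\star)=b-Ax^\star=0$. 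Because $L_\rho$ is convex in $x$ and linear in $(\lambda,\nu)$, the map $T_\rho$ is monotone, whence $T^{(k)\top}(z^{(k)}-z^\star)\ge 0$.

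The heart of the argument is a sharper lower bound on this quantity. Writing $z^{(k)}=(x,\lambda,\nu)$ and using convexity of $x\mapsto L_\rho(x,\lambda,\nu)$ together with $L_\rho(x^\star,\lambda,\nu)=f_0(x^\star)=p^\star$ (feasibility of $x^\star$ makes both the coupling and the penalty terms vanish), the cross terms in $\lambda$ and $\nu$ cancel and I obtain
\begin{equation}
T^{(k)\top}(z^{(k)}-z^\star)\ge\big(f_0(x)-p^\star+F(x)^\top\lambda^\star+(Ax-b)^\top\nu^\star\big)+\rho\big(\|F(x)\|_2^s+\|Ax-b\|_2^s\big).
\end{equation}
The parenthesised term equals $L(x,\lambda^\star,\nu^\star)-p^\star$, which is nonnegative by the inequality $L(x,\lambda^\star,\nu^\star)\ge p^\star$ in \eqref{eq:saddle.point}; hence $T^{(k)\top}(z^{(k)}-z^\star)\ge\rho(\|F(x)\|_2^s+\|Ax-b\|_2^s)$. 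I expect this to be the main obstacle, since it is exactly where the extra penalty term of \eqref{eq:modified.augmented.CCOP} is needed to dominate the infeasibility, and where the saddle-point property must be invoked.

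Then I would telescope. Since $\delta\in(0,1)$, the series $\sum_k\gamma_k^2=\sum_k(k+1)^{-2+\delta}$ converges, so the recursion yields a uniform bound $\|z^{(k)}-z^\star\|_2\le R$; by the analogue of Lemma \ref{gradbound} (iterates confined to a compact set, Assumption \ref{ass:bound.subgrad}) this gives $\|T^{(k)}\|_2\le C$. Summing the recursion, discarding the nonnegative left-hand side, and using $\|T^{(k)}\|_2\le C$ produces $\sum_{i=1}^k\gamma_i\,T^{(i)\top}(z^{(i)}-z^\star)\le M<\infty$ uniformly in $k$. By the minimality defining $i_k$ and $\sum_{i=1}^k(i+1)^{-1+\delta/2}=\Theta((k+1)^{\delta/2})$,
\begin{equation}
T^{(i_k)\top}(z^{(i_k)}-z^\star)\le\frac{M}{\sum_{i=1}^k\gamma_i}=\mathcal{O}\big((k+1)^{-\delta/2}\big).
\end{equation}
Combining with the penalty lower bound gives $\|F(x^{(i_k)})\|_2^s+\|Ax^{(i_k)}-b\|_2^s=\mathcal{O}((k+1)^{-\delta/2})$, so each infeasibility term, and hence their sum, is $\mathcal{O}((k+1)^{-\delta/(2s)})$. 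Finally I would sandwich the optimality gap: the upper bound follows by dropping the nonnegative terms and applying Cauchy--Schwarz $(Ax-b)^\top\nu^\star\ge-\|Ax-b\|_2\|\nu^\star\|_2$, while the lower bound $f_0(x^{(i_k)})-p^\star\ge-\|\lambda^\star\|_2\|F(x^{(i_k)})\|_2-\|\nu^\star\|_2\|Ax^{(i_k)}-b\|_2$ comes again from \eqref{eq:saddle.point}. As $s\ge 1$, both sides are $\mathcal{O}((k+1)^{-\delta/(2s)})$, yielding \eqref{eq:conver.subseq} at the claimed rate.
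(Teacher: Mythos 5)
Your proposal is correct and follows essentially the same argument as the paper's proof: the same distance recursion for $\|z^{(k)}-z^\star\|_2^2$, the same key lower bound $T^{(k)\top}(z^{(k)}-z^\star)\ge L(x^{(k)},\lambda^\star,\nu^\star)-p^\star+\rho\bigl(\|F(x^{(k)})\|_2^s+\|Ax^{(k)}-b\|_2^s\bigr)\ge 0$ justified by the saddle-point inequality, the same telescoping using $\sum_i\gamma_i^2<\infty$ and boundedness of $\|T^{(k)}\|_2$, the same use of minimality of $i_k$ against the divergent sum $\sum_{i=1}^k\gamma_i=\Theta((k+1)^{\delta/2})$, and the same extraction of the $\mathcal{O}((k+1)^{-\delta/(2s)})$ rate plus sandwich of the optimality gap. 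The only cosmetic difference is that you obtain the key bound from convexity of $x\mapsto L_\rho(x,\lambda^{(k)},\nu^{(k)})$ at the feasible point $x^\star$ (giving the constant $\rho$), while the paper expands term by term through subgradient inequalities for $f_0$ and each $F_i$ (giving $\rho s$); both are valid and yield the claimed rate.
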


\begin{proof}

Let $z^\star:=(x^\star,\lambda^\star,\nu^\star)$. 
Let $R>0$ be large enough such that $R\ge \|z^{(1)}\|_2$ and $R\ge \|z^\star\|_2$.
We start by writing out a basic identity
\begin{equation}
\begin{array}{rl}
    \|z^{(k+1)}-z^\star\|_2^2&=\|z^{(k)}-z^\star\|_2^2 -2\alpha_k T^{(k)\top}(z^{(k)}-z^\star)+\alpha_k^2\|T^{(k)}\|_2^2\\
    &=\|z^{(k)}-z^\star\|_2^2 -2\gamma_k\frac{ T^{(k)\top}}{\|T^{(k)}\|_2}(z^{(k)}-z^\star)+\gamma_k^2\,.
    \end{array}
\end{equation}
By summing it over $k$ and rearranging the terms, we get
\begin{equation}\label{eq:bound.norm}
    \|z^{(k+1)}-z^\star\|_2^2+2\sum_{i=1}^k\gamma_i\frac{ T^{(i)\top}}{\|T^{(i)}\|_2}(z^{(i)}-z^\star) = \|z^{(1)}-z^\star\|^2_2 +\sum_{i=1}^k\gamma_i^2\le 4R^2+S.
\end{equation}
The latter inequality is due to $\sum_{i=1}^\infty\gamma_i^2=S<\infty$
(see Abel's summation formula).

We argue that the sum on the lefthand side is nonnegative.
 First, we estimate a lower bound of 
 \begin{equation}
 \begin{array}{rl}
     T^{(k)\top}(z^{(k)}-z^\star)=&\partial_x L_{\rho}(z^{(k)})^\top(x^{(k)}-x^\star)\\
     &-\partial_\lambda L_{\rho}(z^{(k)})^\top(\lambda^{(k)}-\lambda^\star)
     -\partial_\nu L_{\rho}(z^{(k)})^\top(\nu^{(k)}-\nu^\star)\,.
     \end{array}
 \end{equation}
With $F^{(k)}\ne0$ and $Ax^{(k)}\ne b$, the first term further expands to
\begin{equation}
\begin{array}{rl}
    &\partial_x L_{\rho}(z^{(k)})^\top(x^{(k)}-x^\star) \\[5pt]
    =&  g_0^{(k)\top}(x^{(k)}-x^\star)\\[5pt]
    &+\sum\limits_{i=1}^m\left(\lambda_i^{(k)}+  \rho s\| F^{(k)}\|_2^{s-2} F_i^{(k)}\right) g_i^{(k)\top}(x^{(k)}-x^\star) \\[10pt]
    &+  \nu^{(k)\top}A(x^{(k)}-x^\star)+ \rho s \|Ax^{(k)}-b\|_2^{s-2}(Ax^{(k)}-b)^\top A(x^{(k)}-x^\star)\\[5pt]
     \ge & f_0(x^{(k)}) - p^\star + \lambda^{(k)\top} F^{(k)} + \rho s\| F^{(k)}\|_2^s\\[5pt]
     &+\nu^{(k)\top}(Ax^{(k)}-b) +\rho s\|Ax^{(k)}-b\|_2^s\,. \qquad\text{(since $Ax^\star=b$)}
\end{array}
\end{equation}
It is due to definition of subgradient, for the objective function, we have 
\begin{equation}
    g_0^{(k)\top}(x^{(k)}-x^\star)\ge f_0(x^{(k)}) - p^\star\,,
\end{equation}
and for the constraints
\begin{equation}
    g_i^{(k)\top}(x^{(k)}-x^\star)\ge F_i^{(k)} - F_i(x^\star)=F_i^{(k)}\,,\,i=1,\dots,m\,.
\end{equation}
Notice that $\lambda_i^{(k)}+ \rho s\| F^{(k)}\|_2^{s-2} F_i^{(k)}$ is nonnegative since $\lambda_i^{(k)}$ and $F_i^{(k)}$ are nonnegative.
Next, we have
\begin{equation}
    -\partial_\lambda L_{\rho}(z^{(k)})^\top(\lambda^{(k)}-\lambda^\star)=-F^{(k)\top} (\lambda^{(k)}-\lambda^\star)
\end{equation}
 and 
 \begin{equation}
      -\partial_\nu L_{\rho}(z^{(k)})^\top(\nu^{(k)}-\nu^\star)=(b-Ax^{(k)})^\top(\nu^{(k)}-\nu^\star)\,.
 \end{equation}
Using these and subtracting, we obtain
\begin{equation}\label{eq:lower.bound.T}
\begin{array}{rl}
    T^{(k)\top}(z^{(k)}-z^\star)\ge& f_0(x^{(k)}) - p^\star  +\lambda^{\star\top}F^{(k)}+ \nu^{\star\top}(Ax^{(k)}-b)\\[5pt]
    &+\rho s(\| F^{(k)}\|_2^s + \|Ax^{(k)}-b\|_2^s)\\[5pt]
    =& L(x^{(k)},\lambda^\star,\nu^\star)-L(x^\star,\lambda^\star,\nu^\star)\\[5pt]
    &+\rho s(\| F^{(k)}\|_2^s + \|Ax^{(k)}-b\|_2^s)\\
    \ge & 0\,.
\end{array}
\end{equation}
The latter inequality is implied by using \eqref{eq:saddle.point}.
It is remarkable that \eqref{eq:lower.bound.T} is still true even if $F^{(k)}=0$ or $Ax^{(k)}=b$.

Since both terms on the lefthand side of \eqref{eq:bound.norm} are nonnegative, for all $k$, we have
\begin{equation}\label{eq:estimate}
    \|z^{(k+1)}-z^\star\|_2^2\le 4R^2+S\quad \text{and}\quad 2\sum_{i=1}^k\gamma_i\frac{ T^{(i)\top}}{\|T^{(i)}\|_2}(z^{(i)}-z^\star)\le 4R^2+S\,.
\end{equation}
The first inequality yields that there exists positive real $D$ satisfying $\|z^{(k)}\|\le D$, namely $D=R+\sqrt{4R^2+S}$. 
By assumption, the norm of subgradients $g_i^{(k)}$, $i=0,1,\dots,m$ on the set $\|x^{(k)}\|_2$ is bounded, so it follows that $\|T^{(k)}\|_2$ is bounded by some positive real $C$ independent from $k$.

The second inequality of \eqref{eq:estimate} implies that 
\begin{equation}
         T^{(i_k)\top}(z^{(i_k)}-z^\star)\le \frac{C(4R^2+S)}{2\sum_{i=1}^k\gamma_i}\le \frac{W}{(k+1)^{\delta/2}} \,.
\end{equation}
for some $W>0$ independent from $k$. 
The latter inequality is due to the asymptotic behavior of the zeta function.
Moreover, \eqref{eq:lower.bound.T} turns out that
\begin{equation}
\begin{array}{rl}
    & (L(x^{(i_k)},\lambda^\star,\nu^\star)-L(x^\star,\lambda^\star,\nu^\star))+\rho s \| F(x^{(i_k)})\|_2^s + \rho s\|Ax^{(i_k)}-b\|_2^s  \\[5pt]
    \le &T^{(i_k)\top}(z^{(i_k)}-z^\star)\le \frac{W}{(k+1)^{\delta/2}}\,.
\end{array}
\end{equation}
Since three terms on the lefthand size are nonnegative, we obtain 
\begin{equation}
     \| F(x^{(i_k)})\|_2\le \frac{W^{1/s}}{(\rho s)^{1/s}(k+1)^{\delta/(2s)}}\,,\qquad \|Ax^{(i_k)}-b\|_2\le \frac{W^{1/s}}{(\rho s)^{1/s}(k+1)^{\delta/(2s)}}\,,
\end{equation}
and
\begin{equation}
\begin{array}{rl}
    \frac{W}{(k+1)^{\delta/2}}\ge& L(x^{(i_k)},\lambda^\star,\nu^\star)-L(x^\star,\lambda^\star,\nu^\star)\\
   = &f_0(x^{(i_k)}) - p^\star  +\lambda^{\star\top}F(x^{(i_k)})+ \nu^{\star\top}(Ax^{(i_k)}-b)\ge 0\,.
    \end{array}
\end{equation}
Using these, we have 
\begin{equation}
    \frac{W}{(k+1)^{\delta/2}}+\xi_{i_k}\ge f_0(x^{(i_k)}) - p^\star  \ge -\xi_{i_k}\,.
\end{equation}
where $\xi_{i_k}=\|\lambda^{\star}\|_2\|F(x^{(i_k)})\|_2+\|\nu^{\star}\|_2\|Ax^{(i_k)} - b\|_2$.
Since $\xi_{i_k}\to 0$ as $k\to \infty$ with the rate at least $\mathcal{O}((k+1)^{\delta/(2s)})$. 
This proves \eqref{eq:conver.subseq}.
\end{proof}

\bibliographystyle{abbrv}

\end{document}